\newtheorem{theorem}{Theorem}[section]
\newtheorem{lemma}[theorem]{Lemma}
\newtheorem{corollary}[theorem]{Corollary}
\newtheorem{remark}[theorem]{Remark}
\newtheorem{proposition}[theorem]{Proposition}
\newtheorem{definition}[theorem]{Definition}
\newcounter{rmnum}
\numberwithin{equation}{section}
\begin{document}
\title{Locally compact subgroup actions on  topological groups}
\author{Sergey A. Antonyan }

\address{Departamento de  Matem\'aticas,
Facultad de Ciencias, Universidad Nacional Aut\'onoma de M\'exico,
 04510 M\'exico Distrito Federal,  Mexico.}
\email{antonyan@unam.mx}

\begin{abstract}

Let $X$ be a Hausdorff topological group and $G$ a locally compact
subgroup of $X$. We show that  $X$ admits a locally finite $\sigma$-discrete
$G$-functionally open cover each member of which  is $G$-homeomorphic to  a  twisted product $G\times_H S_i$, where $H$ is a compact large subgroup of $G$  (i.e., the quotient $G/H$ is a manifold).  If, in addition,  the space of connected components of $G$ is compact and $X$ is normal, then $X$
itself is $G$-homeomorphic to a twisted product $G\times_KS$, where
$K$ is a maximal compact subgroup of $G$. This implies that $X$
 is $K$-homeomorphic to the product $G/K\times S$, and in particular,  $X$ is homeomorphic  to the  product $\Bbb R^n\times S$, where $n={\rm dim\,} G/K$.  Using these results we
prove  the inequality   $ {\rm dim}\, X\le {\rm dim}\, X/G + {\rm
dim}\, G$ for every Hausdorff   topological group $X$ and a locally compact subgroup $G$ of $X$.
\end{abstract}

\thanks {{\it 2010 Mathematics Subject Classification}. 22A05,  22F05,  54H11, 54H15, 54F45.}
\thanks{{\it  Key words and phrases}. Proper $G$-space; orbit space; locally compact group; paracompact space; dimension.}

\maketitle
\markboth{SERGEY A. ANTONYAN}{APPLICATIONS OF PROPER ACTIONS}

\section{Introduction}

By a $G$-space we mean a completely regular Hausdorff space together with a fixed continuous action of a given  Hausdorff topological  group  $G$ on it.

The notion of a proper $G$-space  was introduced in 1961 by R.~Palais \cite{pal:61} with the purpose to extend a substantial portion of the   theory of compact Lie group actions to the case of noncompact ones.

Recall that a  $G$-space   $X$ is called proper (in the sense of Palais \cite[Definition 1.2.2]{pal:61}), if each point of $X$ has a, so called, {\it small} neighborhood, i.e., a neighborhood $V$ such that for every point of $X$ there is a neighborhood $U$ with the property that the set
$\langle U,V\rangle=\{g\in G \ | \  gU\cap V\not= \emptyset\}$    has compact closure in $G$.

 Clearly, if $G$ is compact, then  every $G$-space is  proper.
Important examples of  proper $G$-spaces are the coset spaces
$G/H$ where  $H$ is  a compact subgroup of a locally compact group $G$.

In \cite{ant:10} we have shown that if $G$ is a locally compact subgroup of a  Hausdorff topological group $X$, then $X$ is a proper $G$-space with respect to  the natural action $g*x=xg^{-1}$ (or equivalently, $g*x=gx$) of $G$ on $X$. Based on the theory of proper actions, we  proved   in \cite{ant:10} that many topological properties (among which are normality and paracompactness)  are transferred from $X$ to its  quotient space $X/G$.

\medskip
This  paper is continuation of \cite{ant:10}; here we further apply the results and methods of the theory of proper actions in  the study of    topological  groups with respect to the natural  translation action of a  locally compact subgroup.

\smallskip

Below all topological groups are assumed to satisfy at least  the Hausdorff separation axiom.

All the notions involved in  the formulations of the following main results are defined in the next section:

\begin{theorem}\label{T:tub}  Let $X$ be a topological  group and   $G$ a locally compact subgroup of $X$. Then there exists a  compact subgroup $H\subset G$ such that:
\begin{enumerate}
\item  the quotient  $G/H$ is a manifold, and
\item  there exists a  locally finite $\sigma$-discrete cover of $X$ consisting of $G$-functionally open   sets $W_i$ such that
 the  closure $\overline{W_i}$ is a $G$-tubular set with the slicing subgroup $H$ (that is to say,   there exists a $G$-equivariant map $\overline{W_i}\to G/H$).
 \item each $\overline{W_i}$ is $G$-homeomorphic to a twisted product $G\times_H S_i$.
 \end{enumerate}
\end{theorem}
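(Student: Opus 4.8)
The plan is to leverage the proper action structure that $X$ carries as a $G$-space. By the author's earlier work cited as \cite{ant:10}, the natural translation action of the locally compact subgroup $G$ on $X$ is proper in the sense of Palais. This is the essential input: it places us squarely in the setting where slice-type theorems for proper actions of locally compact groups are available.

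First I would invoke the existence of slices for proper $G$-spaces. For a proper action of a locally compact group $G$, each point $x \in X$ admits a compact subgroup $H_x$ (contained in, or conjugate into, the stabilizer structure) together with a slice: a neighborhood that is $G$-homeomorphic to a twisted product $G \times_{H_x} S$. The real subtlety, and the reason the theorem fixes a \emph{single} compact subgroup $H$ with $G/H$ a manifold, is that in a topological group the stabilizers of the translation action are trivial, so the slicing subgroup $H$ is not forced on us by isotropy but must instead be produced as a compact \emph{large} subgroup of $G$ — one for which $G/H$ is a manifold. The natural tool here is the Abe--Iwasawa structure theory for locally compact groups: $G$ has an open subgroup possessing a compact normal subgroup $H$ with the quotient a Lie group, so that $G/H$ is a manifold. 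I would first secure such an $H$, establish that $G/H$ is a manifold (giving part (1)), and then show that $X$ admits tubular neighborhoods with slicing subgroup this fixed $H$.

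Next I would convert the local slice structure into the desired global cover. The properness of $X$ as a $G$-space, combined with the normality/paracompactness transfer results of \cite{ant:10}, gives that the orbit space $X/G$ is paracompact. Over $X/G$ I would take the open cover by images of tubular neighborhoods and refine it: every paracompact space admits a locally finite $\sigma$-discrete open refinement (indeed an open refinement that is the union of countably many discrete families). Pulling this refinement back along the orbit map $X \to X/G$ produces a $G$-invariant, locally finite, $\sigma$-discrete cover of $X$ by $G$-functionally open sets $W_i$, where the functional openness comes from composing a Urysohn-type function on the paracompact orbit space with the orbit map. For each $W_i$ I would arrange that its closure $\overline{W_i}$ still maps $G$-equivariantly to $G/H$, making $\overline{W_i}$ a $G$-tubular set with slicing subgroup $H$; this yields part (2). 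Finally, part (3) follows from the standard identification of a tubular set admitting a $G$-map to $G/H$ with a twisted product: given the $G$-equivariant map $\overline{W_i} \to G/H$, its fiber over the base point $eH$ is an $H$-invariant slice $S_i$, and the reconstruction map $G \times_H S_i \to \overline{W_i}$ is a $G$-homeomorphism.

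The main obstacle I anticipate is the choice and manipulation of the slicing subgroup $H$ so that it can be taken \emph{uniformly} across the whole cover while simultaneously keeping $G/H$ a manifold and keeping $H$ compact. In the classical compact-Lie-group slice theorem the slicing subgroup is the isotropy group, which varies from orbit to orbit; here the isotropy is trivial but we deliberately introduce a large compact subgroup to absorb the non-Lie part of $G$, and we must verify that the same $H$ serves at every point and is compatible with the twisted-product description. A secondary technical point is upgrading ordinary open tubular neighborhoods to ones whose \emph{closures} are tubular, which requires shrinking the $W_i$ carefully (using the local finiteness and the normality of the orbit space) so that closures remain inside genuine tubes and the equivariant maps to $G/H$ extend to the closures.
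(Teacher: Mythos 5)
Your proposal has two genuine gaps, and both sit exactly where the paper has to work hardest.

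First, your refinement step assumes that the orbit space $X/G$ is paracompact (and normal), citing the transfer results of \cite{ant:10}. But those results transfer paracompactness or normality \emph{from} $X$ to $X/G$, and in Theorem~\ref{T:tub} the group $X$ is only assumed Hausdorff --- it need not be normal, let alone paracompact (compare Theorem~\ref{T:12}, where normality of $X$ is an \emph{extra} hypothesis). So there is no paracompact orbit space to refine over, and the plan of refining an open cover of $X/G$ and pulling it back collapses, as does your Urysohn-function argument for $G$-functional openness and your final shrinking argument, which again invokes ``normality of the orbit space.'' The paper circumvents precisely this difficulty: it picks a right uniformly continuous $f:X\to[0,1]$ with $f(e)=0$ and $f^{-1}\big([0,1)\big)\subset P$, uses Proposition~\ref{P1} to obtain an equivariant map $f_*:X\to U(X)$ whose image $Z$ is a $G$-space metrizable by a $G$-invariant metric, and carries out the refinement in the \emph{pseudometrizable} (hence paracompact) orbit space $Z/G$, pulling the result back through $q\circ f_*$ (Proposition~\ref{P:refined}). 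That construction is what delivers a locally finite, $\sigma$-discrete, $G$-functionally open refinement with no normality or paracompactness hypothesis on $X$.

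Second, you correctly flag the need for a \emph{single} slicing subgroup $H$ valid at every point, but you do not resolve it. Producing a compact $H$ with $G/H$ a manifold from structure theory is fine for part (1), but it gives no mechanism for building an $H$-tube around an arbitrary point of $X$: the approximate slice theorem (Theorem~\ref{T:appr}) yields a large subgroup $K$ that \emph{depends on the point} and cannot be prescribed in advance. The paper's resolution is the homogeneity of $X$: apply Theorem~\ref{T:appr} once, at the unity, to get one $G$-invariant tube $V\ni e$ with slicing map $\varphi:V\to G/H$; then, because the $G$-action is $g*t=tg^{-1}$, every left translate $xV$ is again $G$-invariant and is tubular with the \emph{same} $H$ via the $G$-map $xv\mapsto\varphi(v)$. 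Taking a $G$-invariant $P$ with $\overline{P}\subset V$ (only regularity of $X/G$ is needed here, which proper actions do provide) and refining the cover $\{xP\}_{x\in X}$ as above gives $\overline{W_i}\subset x\overline{P}\subset xV$, so each closure inherits the slicing map; part (3) then follows, as you say, from the standard twisted-product description of tubes with compact slicing subgroup. Without this translation trick your sentence ``then show that $X$ admits tubular neighborhoods with slicing subgroup this fixed $H$'' has no proof behind it.
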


We recall that a locally compact group is called {\it almost connected} if its space of connected components is compact.

\begin{theorem}\label{T:tubmax}  Let $X$ be a topological  group,  $G$ an almost connected  subgroup of $X$, and $K$ a maximal compact subgroup of $G$. Then there exists a  locally finite $\sigma$-discrete cover of $X$ consisting of $G$-functionally open  sets  $W_i$ such that the  closure $\overline{W_i}$ is a $G$-tubular set with the slicing subgroup $K$ (that is to say,   there exists a $G$-equivariant map $\overline{W_i}\to G/K$). In particular, each $\overline{W_i}$ is homeomorphic to a   product $G/K\times S_i$.
\end{theorem}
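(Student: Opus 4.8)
The plan is to derive Theorem~\ref{T:tubmax} from Theorem~\ref{T:tub} by replacing the compact large slicing subgroup $H$ with the prescribed maximal compact subgroup $K$, using the classical structure theory of almost connected locally compact groups. First I would apply Theorem~\ref{T:tub} to obtain a compact large subgroup $H\subset G$ together with a locally finite $\sigma$-discrete cover $\{W_i\}$ of $X$ by $G$-functionally open sets whose closures $\overline{W_i}$ are $G$-invariant and admit $G$-equivariant maps $\phi_i\colon \overline{W_i}\to G/H$. Since I intend to keep this very cover, all the asserted topological properties (local finiteness, $\sigma$-discreteness, $G$-functional openness) are inherited at no cost; the only genuine task is to upgrade the target of each $\phi_i$ from $G/H$ to $G/K$.

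To carry this out I would invoke the Cartan--Iwasawa--Malcev theorem: in an almost connected locally compact group every compact subgroup lies in a maximal compact subgroup, and all maximal compact subgroups are conjugate. Hence there is $a\in G$ with $H\subseteq aKa^{-1}$. The inclusion induces a $G$-equivariant projection $p\colon G/H\to G/(aKa^{-1})$, $xH\mapsto x(aKa^{-1})$, and there is the standard $G$-homeomorphism $q\colon G/(aKa^{-1})\to G/K$, $x(aKa^{-1})\mapsto xaK$ (well defined since $y^{-1}x\in aKa^{-1}$ forces $xaK=yaK$). Composing, $\psi_i:=q\circ p\circ\phi_i\colon \overline{W_i}\to G/K$ is a $G$-map, so each $\overline{W_i}$ is a $G$-tubular set with slicing subgroup $K$, as required. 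Because $X$ is a proper $G$-space by \cite{ant:10}, each $G$-invariant $\overline{W_i}$ is itself proper, so the tube--twisted-product correspondence already exploited in Theorem~\ref{T:tub}(3) applies and yields a $G$-homeomorphism $\overline{W_i}\cong G\times_K S_i$, where $S_i=\psi_i^{-1}(eK)$.

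For the final product decomposition I would use the Iwasawa--Montgomery--Zippin structure theorem: for $G$ almost connected with maximal compact subgroup $K$, the orbit space $G/K$ is homeomorphic to a Euclidean space $\R^n$ with $n=\dim G/K$, and the natural map $G\to G/K$ admits a continuous section $\sigma\colon G/K\to G$. Given such a $\sigma$, the map $G/K\times S_i\to G\times_K S_i$, $(gK,s)\mapsto[\sigma(gK),s]$, is a homeomorphism; its inverse sends $[g,s]$ to $(gK,k^{-1}s)$, where $k\in K$ is determined by $\sigma(gK)=gk$, and one checks directly that this is well defined on $K$-orbits and continuous. Consequently $\overline{W_i}\cong G\times_K S_i\cong G/K\times S_i$, which is precisely the ``in particular'' assertion.

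The main obstacle is structural rather than logical: the whole argument rests on two deep classical facts about almost connected locally compact groups, namely that $H$ can be conjugated inside $K$ (Cartan--Iwasawa--Malcev) and that $G\to G/K$ is trivial with $G/K\cong\R^n$ (Iwasawa). Once these are granted, the proof is a clean composition of $G$-maps followed by the slice mechanism; the only care needed is to keep track of the conjugating element $a$ and to verify continuity of the trivialization of $G\times_K S_i$, both of which are routine.
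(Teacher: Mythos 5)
Your proposal is correct and, for the main assertion, follows exactly the paper's route: apply Theorem~\ref{T:tub} to get a locally finite $\sigma$-discrete cover by $G$-functionally open sets whose closures are tubes with a compact large slicing subgroup $H$, then compose each slicing map with the ``evident'' $G$-map $G/H\to G/K$ obtained from the fact that $H$ is conjugate into the maximal compact subgroup $K$; the paper leaves this map implicit, while you spell out the factorization $G/H\to G/(aKa^{-1})\to G/K$, which is precisely what is meant. The one genuine divergence is the final product decomposition: the paper simply cites Abels' parallelizability theorem \cite[Theorem~2.1]{ab:74}, which yields a $K$-equivariant homeomorphism $G\times_K S_i\cong G/K\times S_i$, whereas you re-derive a (non-equivariant) homeomorphism by invoking the structure-theoretic fact that the bundle $G\to G/K$ admits a global continuous section with $G/K\cong\R^n$, and then trivializing the twisted product by hand. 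Both are legitimate: Abels' theorem gives equivariance for free (not needed for the statement as phrased, though the paper records it), while your argument is self-contained modulo the classical section existence --- which is itself essentially Abels' theorem applied to $X=G$, so the two justifications are close cousins. One small remark: properness of $\overline{W_i}$ plays no role in the tube--twisted-product correspondence; a $G$-map $\overline{W_i}\to G/K$ with $K$ compact already suffices (see \cite[Ch.~II, Theorem~4.2]{br:72}), so that sentence of your proof can be dropped.
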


\begin{theorem}\label{T:12}  Let $X$ be a normal topological  group,  $G$ an almost connected   subgroup of $X$, and $K$ a maximal compact subgroup of $G$. Then there exists a global $K$-slice $S$  in $X$ (equivalently,  there exists a $G$-equivariant map $X \to G/K$). In particular,  $X$ is homeomorphic to a   product $G/K\times S$.
\end{theorem}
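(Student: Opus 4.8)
The plan is to promote the local slices furnished by Theorem~\ref{T:tubmax} to a single global one by an equivariant averaging (center-of-mass) construction over the coset space $G/K$. By Theorem~\ref{T:tubmax} fix the locally finite $\sigma$-discrete cover $\{W_i\}$ of $X$ by $G$-functionally open sets together with the $G$-equivariant maps $f_i\colon\overline{W_i}\to G/K$. Since each $W_i$ is $G$-functionally open, there is a $G$-invariant continuous function $\phi_i\colon X\to[0,\infty)$ with $W_i=\phi_i^{-1}(0,\infty)$; local finiteness of the cover makes $\phi=\sum_i\phi_i$ a well-defined, continuous, strictly positive, $G$-invariant function, so that $\lambda_i=\phi_i/\phi$ is a $G$-invariant partition of unity subordinate to $\{W_i\}$. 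Because $\operatorname{supp}\lambda_i\subset\overline{W_i}$, the expression $\lambda_i(x)f_i(x)$ is meaningful for every $x$, and for each $x\in X$ I obtain a finitely supported probability measure $\mu_x=\sum_i\lambda_i(x)\,\delta_{f_i(x)}$ on $G/K$ depending continuously on $x$.

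The combination step uses the geometry of $G/K$. As $G$ is almost connected and $K$ is maximal compact, $G/K$ is homeomorphic to some $\R^n$ and carries a complete $G$-invariant metric (obtained, in the Lie case, by averaging a Riemannian metric over $K$ and using homogeneity); in the reductive case this metric is nonpositively curved, so $G/K$ is a Hadamard space on which every compactly supported probability measure has a unique barycenter $b(\mu)$ that is continuous in $\mu$ and equivariant for the isometry group, while the general case rests on the fact that $G/K$ is an equivariant absolute extensor for proper $G$-spaces. I would then define $f\colon X\to G/K$ by $f(x)=b(\mu_x)$. Continuity of $f$ follows from local finiteness and continuity of the $\lambda_i$ and $f_i$, whereas $G$-equivariance is the whole point of the construction: for $g\in G$ one has $\lambda_i(gx)=\lambda_i(x)$ and $f_i(gx)=g\,f_i(x)$, whence $\mu_{gx}=g_\ast\mu_x$ and therefore $f(gx)=b(g_\ast\mu_x)=g\,b(\mu_x)=g\,f(x)$, with $g$ acting on $G/K$ by the isometry $g'K\mapsto gg'K$.

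Finally I would convert the map back into a slice. Setting $S=f^{-1}(eK)$, the map $f$ exhibits $S$ as a global $K$-slice: $S$ is $K$-invariant, $G\cdot S=X$, and $[g,s]\mapsto gs$ is a $G$-homeomorphism $G\tm_K S\to X$; this is the first assertion. For the ``in particular'', the bundle $G\to G/K$ is principal with compact fiber $K$ over the contractible base $G/K\cong\R^n$, hence trivial, so $G$ is $K$-equivariantly a product $K\tm\R^n$ and consequently $X\cong G\tm_K S\cong\R^n\tm S=G/K\tm S$.

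I expect the genuine obstacle to be the combination step, namely equipping $G/K$ with a $G$-invariant convex structure on which equivariant barycenters exist. The difficulty is twofold: one must pass from an arbitrary almost connected locally compact $G$ to the Lie case via the Gleason--Yamabe approximation (where an invariant convex, resp.\ nonpositively curved, structure is available) and transport the construction back; and one must ensure that the barycenter operation is simultaneously globally defined, continuous in the measure, and equivariant for the full group $G$ and not merely for $K$. This is also where the normality of $X$ is essential, since it guarantees the invariant partition of unity needed to assemble the local data into the global measure-valued map, and it is the identification of $G/K$ as a proper equivariant absolute extensor that does the real work.
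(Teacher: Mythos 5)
Your construction breaks down exactly where you predicted: at the combination (barycenter) step, and that gap cannot be papered over. The Hadamard-space argument requires a $G$-invariant metric of nonpositive curvature on $G/K$, and this exists only in special cases (semisimple/reductive $G$). For a general almost connected group it fails: take $G$ to be the Heisenberg group, which is almost connected with maximal compact subgroup $K=\{e\}$, so $G/K=G\cong\R^3$; by Wolf's theorem a nilpotent Lie group carrying a left-invariant metric of nonpositive curvature must be abelian, so no invariant CAT(0) structure (hence no uniquely defined, equivariant, continuous barycenter of finitely supported measures) is available. Your fallback for the general case --- that $G/K$ is an equivariant absolute extensor for the relevant class of proper $G$-spaces --- is precisely the hard content of the theorem: the extension property for $G/K$ and the existence of global $K$-slices are two formulations of the same fact, so invoking it without proof is circular. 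In short, the reduction to local slices and the invariant partition of unity are fine, but the step that assembles the local maps $f_i$ into one $G$-map $X\to G/K$ is missing, and no soft averaging argument can supply it.

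The paper avoids averaging altogether. It takes the locally finite closed tubular cover $\{C_i\}$ from Theorem~\ref{T:tub}, composes each slicing map with the evident $G$-map $G/H\to G/K$, well-orders the index set, and constructs the global $G$-map $X\to G/K$ by transfinite induction, where each successor step is handled by Abels' gluing lemma (Lemma~\ref{P:abels}) for a union of two invariant sets. That lemma is the ``convexity-free'' substitute for your barycenter: it performs the two-set combination, and transfinite induction over the locally finite cover does the rest. Note also that this is where normality of $X$ actually enters: each $C_j$ is closed in the normal space $X$, hence normal, which is the hypothesis of Lemma~\ref{P:abels} (the slice $f_2^{-1}(eK)$ must be a normal space). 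Your claim that normality is needed to build the invariant partition of unity is off --- your partition comes from $G$-functional openness and local finiteness alone --- and the fact that your argument never genuinely uses normality is itself a symptom that the essential step is absent.
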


Combining Theorem~\ref{T:12}  with a result of Abels~\cite[Theorem~2.1]{ab:74}, we obtain the following:

\begin{corollary}  Let   $X$ be a normal  group, $G$ an almost connected  subgroup of $X$, and  $K$  a maximal compact subgroup of $G$.    Then there exists a $K$-invariant subset  $S\subset X$ such that $X$ is $K$-homeomorphic to the product $G/K\times S$ endowed with the diagonal action of $K$ defined as follows: $k*(gK, s)=(kgK, ks)$. In particular, $X$ is homeomorphic to $\Bbb R^n\times S$, where  $n={\rm dim\,} G/K$.
\end{corollary}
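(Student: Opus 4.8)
The plan is to feed the global $K$-slice supplied by Theorem~\ref{T:12} into Abels' structure theorem. By \cite{ant:10} the translation action of $G$ on $X$ is proper, so $X$ is a proper $G$-space; and since $X$ is normal, Theorem~\ref{T:12} applies and produces a global $K$-slice $S\subset X$, equivalently a $G$-equivariant map $X\to G/K$ together with a $G$-homeomorphism $X\cong_G G\times_K S$. As $K$ is a maximal compact subgroup of the almost connected group $G$, the hypotheses of \cite[Theorem~2.1]{ab:74} are satisfied.

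First I would invoke \cite[Theorem~2.1]{ab:74}: for a proper $G$-space that admits a global $K$-slice $S$, with $K$ maximal compact in the almost connected group $G$, the space is $K$-homeomorphic to $G/K\times S$ equipped with the diagonal action $k*(gK,s)=(kgK,ks)$. Applying this to $X$ and its slice $S$ immediately yields the $K$-equivariant homeomorphism asserted in the first part. For the final sentence I would invoke the Cartan--Iwasawa--Malcev theorem, by which the coset space $G/K$ of an almost connected locally compact group modulo a maximal compact subgroup is homeomorphic to $\R^n$ with $n=\dim G/K$; discarding the $K$-actions in the homeomorphism just obtained gives $X\cong G/K\times S\cong \R^n\times S$.

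The essential content of the corollary is thus already carried by Theorem~\ref{T:12} and the cited theorem of Abels, so no genuine obstacle arises; the only step requiring attention is the verification that the diagonal $K$-action recorded in the statement coincides with the one Abels produces, i.e.\ that under the identification $X\cong_G G\times_K S$ the restricted $K$-action $k\cdot[g,s]=[kg,s]$ matches $k*(gK,s)=(kgK,ks)$ across the homeomorphism $G\times_K S\cong G/K\times S$. This is routine bookkeeping once a $K$-equivariant trivialization of the projection $G\to G/K$ (the topological Cartan decomposition underlying Abels' theorem) has been fixed.
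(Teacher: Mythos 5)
Your proposal matches the paper's own derivation: the corollary is obtained there exactly by combining Theorem~\ref{T:12} (which supplies the global $K$-slice $S$, using properness of the translation action from \cite{ant:10}) with Abels' theorem \cite[Theorem~2.1]{ab:74}, and the final homeomorphism $X\cong \R^n\times S$ follows since $G/K\cong\R^n$ for an almost connected locally compact group. Your argument is correct and essentially identical to the intended one.
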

\smallskip

\begin{corollary}\label{C:0}  Let $X$ be a normal topological  group,  $G$ an almost connected  subgroup of $X$, and $X/G$  the quotient space of all right cosets  $xG=\{xg\ | \ g\in G\}$, $x\in X$.
Then there exists a closed subset $S\subset X$ such that the restriction  $p|_S :S\to X/G$ is a perfect, open,  surjective  map.
\end{corollary}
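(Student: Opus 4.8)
The plan is to take for $S$ the global $K$-slice furnished by Theorem~\ref{T:12}. Writing $f\colon X\to G/K$ for the associated $G$-equivariant map, we have $S=f^{-1}(eK)$; since $G/K$ is a manifold and in particular Hausdorff, the singleton $\{eK\}$ is closed, so $S$ is closed in $X$, as required. Let $p\colon X\to X/G$ denote the orbit map of the translation action $g*x=xg^{-1}$, whose orbits are exactly the right cosets $xG$. The strategy is to show that, under the slice structure, $p|_S$ is nothing but the orbit map of the \emph{compact} group $K$ acting on $S$, and then to read off perfectness, openness and surjectivity from the standard behaviour of compact transformation groups.

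First I would record the defining feature of the slice: for $x\in S$ one has $f(g*x)=g\cdot f(x)=gK$, so $g*x\in S$ iff $g\in K$; hence $S\cap(xG)=K*x=xK$ for every $x\in S$, i.e.\ the slice meets each orbit in a single $K$-orbit. Because $S$ is a \emph{global} slice, the canonical map $G\times_K S\to X$, $[g,s]\mapsto g*s$, is a $G$-homeomorphism (this is what Theorem~\ref{T:12} provides, cf.\ the twisted-product description in Theorem~\ref{T:tubmax}). Passing to orbit spaces yields a homeomorphism $X/G\cong (G\times_K S)/G\cong S/K$ under which $p|_S$ is carried to the $K$-orbit map $\pi\colon S\to S/K$. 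Surjectivity of $p|_S$ is then clear, but in any case follows directly from $G*S=X$: every orbit meets $S$, so $p(S)=p(X)=X/G$.

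It remains to verify that $\pi\colon S\to S/K$ is perfect and open, which is where the compactness of $K$ does all the work. Openness holds for any orbit map, since $\pi^{-1}(\pi(U))=K*U=\bigcup_{k\in K}k*U$ is open whenever $U$ is. The fibres of $\pi$ are the orbits $xK$, which are compact images of $K$, so $\pi$ has compact fibres. For closedness I would invoke the Kuratowski projection lemma: if $C\subset S$ is closed then $K*C=\mathrm{pr}_S\big(\{(k,s)\in K\times S:k^{-1}*s\in C\}\big)$, and the projection $K\times S\to S$ is closed because $K$ is compact; hence $K*C=\pi^{-1}(\pi(C))$ is closed and $\pi(C)$ is closed in $S/K$. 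Thus $\pi$, and therefore $p|_S$, is a perfect, open, surjective map. The one genuinely delicate point --- the main obstacle --- is the passage from the slice to the $G$-homeomorphism $G\times_K S\cong X$ and the resulting identification $X/G\cong S/K$; this rests on the properness of the translation action of the locally compact subgroup $G$ established in \cite{ant:10} together with the slice theorem, and it is precisely what legitimises transporting all the elementary compact-group arguments from $S$ to $X$.
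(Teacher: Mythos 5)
Your proof is correct and follows essentially the same route as the paper: take the global $K$-slice $S$ from Theorem~\ref{T:12}, identify $X$ with the twisted product $G\times_K S$ and hence $X/G$ with $S/K$, and then deduce openness and perfectness of $p|_S$ from the compactness of $K$ (the paper compresses this into the single observation that $(p|_S)^{-1}\bigl(p|_S(A)\bigr)=KA$ for $A\subset S$). Your additional details --- the closedness of $S$, the computation $S\cap xG=xK$, and the Kuratowski projection argument --- are exactly the verifications the paper leaves implicit.
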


This fact has the following two immediate corollaries  about transfer of properties from $X$ to $X/G$ and vice versa.

\begin{corollary}\label{C:3}  Let $\mathcal P$ be a topological property stable under open perfect   maps  and also inherited by closed subsets.  Assume that $X$ is a normal topological  group with the  property  $\mathcal P$ and let $G$ be an almost connected  subgroup of $X$. Then the quotient space  $X/G$ also has the property $\mathcal P$.
\end{corollary}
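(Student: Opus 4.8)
The plan is to derive this as an immediate formal consequence of Corollary~\ref{C:0}, into which all of the substantive work has already been packed. The argument is a one-line deduction that threads the closed global slice through the two stability hypotheses imposed on the property $\mathcal P$.

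First I would invoke Corollary~\ref{C:0} to produce a closed subset $S\subset X$ for which the restriction $p|_S\colon S\to X/G$ of the orbit projection is a perfect, open, surjective map. This is the only nontrivial input, and it is already at our disposal (resting, in turn, on the global slice furnished by Theorem~\ref{T:12}). Next, because $X$ enjoys the property $\mathcal P$ and $S$ is a closed subspace of $X$, the hypothesis that $\mathcal P$ is inherited by closed subsets gives that $S$ itself has $\mathcal P$. Finally, since $p|_S\colon S\to X/G$ is a perfect open surjection and $\mathcal P$ is stable under open perfect maps, the image $X/G$ inherits $\mathcal P$, which is exactly the assertion.

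I do not anticipate any real obstacle here: the analytic content has been entirely absorbed into Corollary~\ref{C:0}, and the present statement is a tautological combination of the two defining closure conditions on $\mathcal P$ with the existence of the closed set $S$. The only point meriting a word of care is bookkeeping of topologies, namely that $S$ be regarded with its subspace topology from $X$ (so that ``closed subspace'' applies) and $X/G$ with its quotient topology (so that $p|_S$ is the asserted open perfect surjection onto it); both conventions are built into the statement of Corollary~\ref{C:0}, so no extra verification is needed.
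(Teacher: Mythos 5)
Your proof is correct and is exactly the argument the paper intends: Corollary~\ref{C:3} is stated there as an immediate consequence of Corollary~\ref{C:0}, obtained by letting the closed slice $S$ inherit $\mathcal P$ and then pushing $\mathcal P$ forward along the open perfect surjection $p|_S\colon S\to X/G$. No gaps; the bookkeeping remark about topologies is fine and indeed already built into Corollary~\ref{C:0}.
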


Among properties stable under open perfect   maps  and also inherited by closed subsets we highlight  strong paracompactness and realcompactness (see \cite[Exercises~5.3.C(c),  5.3H(d), and Theorem~3.11.4 and Exercises~3.11.G] {eng:77}).

\begin{corollary}\label{C:6}  Let $\mathcal P$ be a topological property that is both invariant  and inverse invariant under  open perfect  maps,  and also stable under multiplication  by a locally compact group.
 Assume that $X$ is a normal  topological  group and let $G$ be an almost connected  subgroup of $X$ such that  the quotient space $X/G$ has  the   property  $\mathcal P$. Then   the group $X$  also has the property $\mathcal P$.
\end{corollary}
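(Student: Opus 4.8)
The plan is to combine the product decomposition of $X$ furnished by Theorem~\ref{T:12} with the open perfect surjection of Corollary~\ref{C:0}, and then to transport the property $\mathcal P$ from $X/G$ to $X$ in three moves, using each of the three hypotheses on $\mathcal P$ exactly once. The whole argument is essentially bookkeeping: deciding which hypothesis to apply at which stage.

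First I would apply Corollary~\ref{C:0} to obtain a closed subset $S\subset X$ for which the restriction $p|_S\colon S\to X/G$ of the orbit map is perfect, open, and surjective. Since $X/G$ has property $\mathcal P$ by assumption, and $\mathcal P$ is inverse invariant under open perfect maps, it follows immediately that the slice $S$ has property $\mathcal P$. Next I would invoke Theorem~\ref{T:12} together with the corollary following it to write $X$, up to homeomorphism, as a product $\R^n\times S$, where $n={\rm dim}\, G/K$. Because $\R^n$ is a locally compact group and $\mathcal P$ is stable under multiplication by a locally compact group, the product $\R^n\times S$ has property $\mathcal P$. Finally, since $\mathcal P$ is a topological property it is invariant under homeomorphism, so $X\cong \R^n\times S$ also has property $\mathcal P$, which completes the argument. (Alternatively, one may note that a homeomorphism is an open perfect surjection and use invariance of $\mathcal P$ under open perfect maps for this last step.)

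There is no serious analytic obstacle here, as the heavy lifting has already been done in Theorems~\ref{T:tubmax} and~\ref{T:12}. The only point I would take care to state explicitly is that the same global $K$-slice $S$ serves simultaneously in Corollary~\ref{C:0} and in the product decomposition $X\cong\R^n\times S$ — in other words, that the closed set $S$ for which $p|_S$ is open and perfect is precisely the slice appearing in the factorization. This is built into the slice formalism, since the orbit space $X/G$ is canonically $S/K$ and $p|_S$ is then the orbit map of the compact group $K$ acting on $S$, but I would spell it out to avoid conflating two a priori different sets denoted $S$.
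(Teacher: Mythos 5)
Your argument is correct, but it rebuilds $X$ from the slice $S$ by a different route than the one the paper intends. The paper presents Corollary~\ref{C:6} as an immediate consequence of Corollary~\ref{C:0}, and the argument matching its hypothesis list runs as follows: by inverse invariance under open perfect maps, $S$ acquires $\mathcal P$ from $X/G$ (your first step, identical); then, since $\mathcal P$ is stable under multiplication by the locally compact group $G$, the product $G\times S$ has $\mathcal P$; finally, the $K$-orbit map $\xi\colon G\times S\to G\times_K S\cong_G X$ is an open perfect surjection (this is exactly what is noted in the proof of Corollary~\ref{C:0}), so invariance under open perfect maps hands $\mathcal P$ to $X$. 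You instead pass through the corollary following Theorem~\ref{T:12}, writing $X\cong \R^n\times S$ via Abels' theorem and the structure theory $G/K\cong\R^n$, multiplying by the locally compact group $\R^n$, and finishing with mere homeomorphism invariance. The trade-off: your route never genuinely uses the hypothesis that $\mathcal P$ is invariant under open perfect maps (homeomorphism invariance is automatic for a topological property), so you in effect prove a formally stronger statement with fewer hypotheses; the cost is reliance on Abels' parallelizability theorem, whereas the paper's route stays inside the twisted-product formalism already set up in the proof of Corollary~\ref{C:0} and uses each of the three hypotheses on $\mathcal P$ exactly once, which is why the corollary is stated the way it is. Your closing remark is well taken and is indeed the one point that must be made explicit on your route: the closed set $S$ of Corollary~\ref{C:0} and the factor $S$ in $X\cong\R^n\times S$ are the same global $K$-slice produced by Theorem~\ref{T:12}, with $p|_S$ the $K$-orbit map of $S$ under the identification $X/G\cong S/K$; on the paper's route this identification is not needed, since everything happens inside the single twisted product $G\times_K S$.
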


Among such properties  we highlight   realcompactness (see
\cite[Theorem~3.11.14 and  Exercise~3.11.G, and also take into
account  that every locally compact group is realcompact]
{eng:77}).

Note that   A.\,V.~Arhangel'skii \cite{arh:05} has studied  properties which
are transferred from $X/G$ \ to $X$ for an arbitrary topological group $X$ and its locally compact subgroup $G$; see also \cite[Corollary~1.10]{ant:10}.

\medskip

Theorems \ref{T:tub} and \ref{T:12} are  further applied to prove the following Hurewicz type formula in dimension theory which, for a paracompact group $X$ and an almost connected subgroup $G\subset X$, was proved in \cite[Theorem~1.12]{ant:10}:

\begin{theorem}\label{T:13}  Let  $G$ be a locally compact subgroup of a    topological group   $X$.
   Then
$$ {\rm dim}\, X\le {\rm dim}\, X/G + {\rm dim}\, G.$$
\end{theorem}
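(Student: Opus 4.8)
The plan is to combine the local structure theorem (Theorem~\ref{T:tub}) with the sum and monotonicity theorems for covering dimension, carried out entirely with functionally open (cozero) sets so as to avoid any appeal to normality of $X$. First I would invoke Theorem~\ref{T:tub} to fix a compact subgroup $H\subset G$ with $G/H$ a manifold together with a locally finite $\sigma$-discrete cover $\{W_i\}$ of $X$ by $G$-functionally open sets whose closures satisfy $\overline{W_i}\cong_G G\times_H S_i$, where each $S_i$ is an $H$-slice. Since each $W_i$ is a cozero subset of $X$ and of $\overline{W_i}$, monotonicity of $\dim$ on functionally open subsets gives $\dim W_i\le\dim\overline{W_i}$, while the sum theorem for $\sigma$-locally finite cozero covers yields $\dim X\le\sup_i\dim W_i\le\sup_i\dim\overline{W_i}$. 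Thus everything reduces to the single bound $\dim\overline{W_i}\le\dim(X/G)+\dim G$ for each index $i$.

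Next I would estimate $\dim\overline{W_i}$ from the twisted product structure. The natural projection $G\times_H S_i\to G/H$ is a fiber bundle with fiber $S_i$ over the manifold $G/H$, whose dimension is $m:=\dim G/H=\dim G-\dim H$ because $H$ is compact. Local triviality presents $\overline{W_i}$, up to a $\sigma$-locally finite cozero cover, by charts homeomorphic to $\R^m\times S_i$; the product inequality over a Euclidean factor together with the cozero sum theorem then gives $\dim\overline{W_i}\le m+\dim S_i$. To replace $\dim S_i$ by the dimension of the slice orbit space I would apply the Hurewicz inequality for the action of the compact group $H$ on $S_i$, namely $\dim S_i\le\dim(S_i/H)+\dim H$. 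Combining the two estimates with the additivity $\dim G=m+\dim H$ produces $\dim\overline{W_i}\le\dim(S_i/H)+\dim G$.

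Finally I would identify the orbit spaces. The standard homeomorphism $(G\times_H S_i)/G\cong S_i/H$ identifies $\dim(S_i/H)$ with $\dim(\overline{W_i}/G)$. Because the translation action makes the orbit map $p\colon X\to X/G$ open (and, for this proper action, closed), $p$ carries the cozero set $W_i$ to a cozero subset of $X/G$ and $\overline{W_i}$ to the corresponding set $\overline{W_i}/G\subset X/G$, so that monotonicity gives $\dim(\overline{W_i}/G)\le\dim(X/G)$. Chaining the inequalities of the previous paragraphs yields $\dim\overline{W_i}\le\dim(X/G)+\dim G$ for every $i$, and the reduction of the first paragraph then gives $\dim X\le\dim(X/G)+\dim G$, as desired.

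The hard part is not any single estimate but the fact that $X$ is not assumed normal, so the classical closed locally finite sum theorem and unrestricted subset monotonicity are unavailable; this is precisely why Theorem~\ref{T:tub} delivers a functionally open $\sigma$-discrete cover, and the whole argument must be phrased through cozero sets and the sum and monotonicity theorems valid for arbitrary Tychonoff spaces. A second delicate point is the Hurewicz inequality $\dim S_i\le\dim(S_i/H)+\dim H$ for the compact, possibly non-Lie, slicing group $H$, which would be reduced to the compact Lie case by an approximation argument; and one must confirm that the openness and closedness of $p$ genuinely place $\overline{W_i}/G$ in $X/G$ as a subset on which $\dim$ behaves monotonically.
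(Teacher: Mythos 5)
Your argument has a genuine gap at its final step. You apply monotonicity of the covering dimension to $\overline{W_i}/G$, which is a \emph{closed} subset of $X/G$. But $X$ is not assumed normal in Theorem~\ref{T:13} (the normal/paracompact case was already known), so $X/G$ cannot be assumed normal either, and for the Kat\v etov dimension of Tychonoff spaces monotonicity holds only for functionally open (more generally, $z$-embedded) subspaces: there exist Tychonoff spaces of dimension $0$ containing closed subspaces of positive dimension. This is exactly the pitfall you yourself flag in your closing paragraph (``the whole argument must be phrased through cozero sets''), yet your chain of inequalities ends on the closed set $\overline{W_i}/G$ rather than on a cozero set. (The parenthetical claim that the orbit map $p$ is closed is also false in general --- consider $\mathbb{Z}$ acting on $\mathbb{R}$ by translations --- though it is not needed: $p(\overline{W_i})$ is closed simply because $\overline{W_i}$ is invariant and $p$ is open.) The repair is what the paper does: work with the sets $W_i$ themselves instead of their closures. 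A $G$-invariant subset of a tubular set is tubular, so restricting the slicing map exhibits $W_i$ as $G\times_H S_i'$ with $S_i'=S_i\cap W_i$; then $W_i/G$ is functionally open in $X/G$ by Proposition~\ref{P:functionally}, Nagami's monotonicity theorem applies to it, and every step stays inside cozero sets.

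A second, smaller issue is that your bound $\dim \overline{W_i}\le \dim G/H+\dim S_i$ rests on the unproved assertion that $G\times_H S_i\to G/H$ is a locally trivial bundle. For non-Lie $G$ this amounts to the existence of local cross sections of $G\to G/H$ when $G/H$ is a manifold, a nontrivial consequence of the Gluškov--Montgomery--Zippin structure theory; it is true, but it must be quoted, not asserted. The paper sidesteps it: it passes to a maximal compact subgroup $K$ and invokes Abels' theorem to get a global homeomorphism $\Phi\cong G/K\times S$, then applies Morita's product theorem, Filippov's inequality and the Pasynkov--Skljarenko equality $\dim G=\dim G/K+\dim K$. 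The price is that maximal compact subgroups exist only for almost connected $G$, so the paper needs a second case reducing general $G$ to $G_0$, via the discreteness of $G/G_0H$, to prove $\dim X/G_0\le \dim X/G$. If you supply a reference for local triviality, your single-case argument (after the fix above) does go through and is arguably cleaner than the paper's two-case proof. Finally, your ``Hurewicz inequality'' $\dim S_i\le \dim S_i/H+\dim H$ needs no approximation argument: it is exactly Filippov's cited theorem $\dim S_i\le \dim S_i/H+\dim q$, combined with the observation that the translation action is free, so every orbit $q^{-1}(a)$ is homeomorphic to $H$ and $\dim q=\dim H$.
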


\begin{remark}[\cite{pas:62}, \cite{skl:64}]\label{R} If in this theorem $X$ is a locally compact group then, in fact,  equality holds:
$$ {\rm dim}\, X = {\rm dim}\, X/G + {\rm dim}\, G.$$
\end{remark}

All the proofs are given in section~3.
\medskip

\section{Preliminaries}

Unless otherwise  stated, by  a {\it
group} we shall mean   a topological group $G$ satisfying the
Hausdorff separation axiom; by $e$ \ we shall denote the unity of
\ $G$.

All topological spaces  are assumed to be Tychonoff (= completely
regular and Hausdorff). The basic ideas and facts of the theory of
$G$-spaces or topological transformation groups can be found in
G.~Bredon \cite{br:72} and in  R.~Palais \cite{pal:60}. Our basic
reference on  proper group actions is Palais' article
\cite{pal:61} (see also \cite{ab:74}, \cite{ab:78}, \cite{elf:01}).

For the  convenience of the reader we recall, however,  some more
special definitions and facts below.

\smallskip

By a $G$-space we mean a topological space $X$ together with a fixed continuous action  $G\times X\to X$ of a
topological  group $G$ on $X$. By $gx$ we shall denote the image of the pair $(g, x)\in G\times X$  under the
action.

 If $Y$ is another $G$-space, a
continuous map $f:X\to Y$ is called a $G$-map or an equivariant
map, if $f(gx)=gf(x)$ for every $x\in X$ and $g\in G$.

If $X$ is a $G$-space, then for a subset $S\subset X$ and for a subgroup $H\subset G$, \ the $H$-hull (or
$H$-saturation) of $S$ is defined as follows:  $H(S)$= $\{hs \ |\ h\in H,\ s\in S\}$. If $S$ is the  one point set
$\{x\}$, then the $G$-hull $G(\{x\})$ usually is denoted by $G(x)$  and called  the orbit of $x$. The orbit
space $X/G$ is always considered in its quotient topology.

A subset $S\subset X$ is called $H$-invariant if it coincides with
its $H$-hull, i.e., $S=H(S)$. By an invariant set we shall mean a
 $G$-invariant set.

For any $x\in X$, the subgroup   $G_x =\{g\in G \ |  \  gx=x\}$ is
called  the stabilizer (or stationary subgroup) at $x$.

 A compatible metric $\rho$ on a metrizable $G$-space $X$  is called invariant or $G$-invariant, if
$\rho(gx, gy)=\rho(x, y)$ for all $g\in G$ and $x, y\in X$. If $\rho$ is a $G$-invariant
metric on any $G$-space  $X$, then it is easy to verify that  the formula
$$\widetilde{\rho}\bigl(G(x), G(y)\bigr)=inf\{\rho(x', y')  \ | \  \ x'\in G(x), \ y'\in G(y)\}$$
 defines  a pseudometric $\widetilde{\rho}$, compatible with the quotient topology of $X/G$. If, in addition, $X$ is a proper $G$-space then  $\widetilde{\rho}$ \ is, in fact,  a metric on $X/G$ \cite[Theorem~4.3.4]{pal:61}.

For a closed subgroup $H \subset G$, by $G/H$ we will denote the $G$-space of cosets $\{gH \mid g\in G\}$ under
the action induced by left translations.

A locally compact group $G$ is called {\it almost connected}, if the space of connected components of
 $G$ is compact. Such a group has a maximal compact subgroup $K$, i.e., every compact subgroup of $G$ is
  conjugate to a subgroup of $K$ (see \cite[Ch.\,H, Theorem~32.5]{stroppel}).

\medskip
\normalfont

In what follows we shall need also  the definition of a twisted product $G\times_K S$, where
 $K$ is a  closed subgroup of $G$,  and   $S$  a $K$-space.
$G\times_KS$ is the orbit space of the $K$-space $G\times S$ on which  $K$ acts by the rule: $k(g,
s)=(gk^{-1}, ks)$. Furthermore, there is a natural action of $G$ on $G\times_K S$ given by
$g^\prime[g, s]=[g^\prime g, s]$, where $g'\in G$ and $[g, s]$ denotes the $K$-orbit of the point
$(g, s)$ in $G\times S$.  We shall identify $S$, by means of the $K$-equivariant embedding $s\mapsto [e, s]$, $s\in S$,  with  the $K$-invariant
subset $\{[e, s] \ | \ s\in S\}$ of $G\times_KS$. This  $K$-equivariant embedding $S\hookrightarrow G\times_KS$ induces a homeomorphism of the $K$-orbit space $S/K$ onto the $G$-orbit space $(G\times_KS)/G$ (see \cite[Ch.~II, Proposition~3.3]{br:72}).

 The twisted products  are  of a particular interest in the theory of
transformation groups  (see \cite[Ch.~II, \S~2]{br:72}). It turns
out that every proper $G$-space  locally is a twisted product. For a more
precise formulation we  need to  recall  the following well-known
notion  of a slice (see \cite[p.~305]{pal:61}):

\begin{definition}\label{D:21} Let $X$ be a $G$-space and   $K$   a closed  subgroup of $G$.
 A  $K$-invariant  subset $S\subset X$ is called a $K$-kernel if  there is a $G$-equivariant map $f:G(S)\to G/K$, {called the slicing map,}
 such that $S$=$f^{-1}(eK)$.   The saturation $G(S)$ is called   a {\it $G$-tubular or just a tubular} set, and the subgroup $K$ will be
 referred to as    the slicing subgroup.

If in addition   $G(S)$ is open in $X$ then  we shall call $S$ a
$K$-slice in $X$.

  If  $G(S)=X$ then  $S$ is called {\it a global} $K$-slice of $X$.
\end{definition}

It turns out that each tubular set $G(S)$ with a compact slicing subgroup $K$ is
$G$-homeomorphic to the twisted product $G\times_K S$; namely the
map  $\xi:G\times_K S\to G(S)$ defined by $\xi([g, s])=gs$ is a
$G$-homeomorphism (see \cite[Ch. II, Theorem 4.2]{br:72}). In what
follows we will  use this fact without a specific reference.

 One of the fundamental  results in  the theory of topological
transformation groups states  (see  \cite[Proposition~
2.3.1]{pal:61}) that, if $X$ is a proper $G$-space with $G$ a Lie
 group, then for any point $x\in X$, there exists a $G_x$-slice $S$ in $X$ such that   $x\in S$.
 In general, when $G$ is not a Lie group, it is no
longer true that a $G_x$-slice exists at each point of $X$ (see
\cite{ant:94} for a discussion). However,  the following approximate version of Palais' slice
theorem  for non-Lie group  actions holds true, which   plays  a key role in the proof of Theorem~\ref{T:13}:

\begin{theorem}[Approximate slice theorem~\cite{ant:large}] \label{T:appr} Let $G$ be a locally compact group,  $X$  a proper $G$-space and  $x\in X$.
Then for any  neighborhood $O$ of $x$ in $X$,   there exist  a    large  subgroup $K\subset G$ with $G_x\subset K$, and a $K$-slice  $S$ such that  $x\in S\subset O$.
 \end{theorem}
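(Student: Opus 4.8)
The plan is to transport Palais' slice theorem for Lie group actions (cited above as \cite[Proposition~2.3.1]{pal:61}) to the non-Lie setting by approximating $G$ from above by a Lie group. First I would record that, since $X$ is a proper $G$-space, the stabilizer $G_x$ is compact. Next I would replace $G$ by a more tractable open subgroup: writing $G^0$ for the identity component, the quotient $G/G^0$ is totally disconnected and locally compact, so by van Dantzig's theorem it has a compact open subgroup containing the (compact) image of $G_x$; its preimage $G'$ is then an open, \emph{almost connected} subgroup of $G$ with $G_x\subset G'$. Since $G'$ is open in $G$, it is enough to produce a suitable $K$-slice for the restricted $G'$-action and then upgrade it to a $G$-slice.

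The heart of the argument is the Lie approximation. Because $G'$ is almost connected, the Gleason--Yamabe approximation theorem (see Montgomery--Zippin) supplies, inside any prescribed neighborhood of $e$, a compact normal subgroup $N$ of $G'$ such that $L:=G'/N$ is a Lie group. Choosing $N$ small enough that $N(x)\subset O$ and passing to the open, $N$-invariant neighborhood $O'=\{y\mid N(y)\subset O\}$ of $x$, I would form the $N$-orbit space $X/N$ with quotient map $\pi_N\colon X\to X/N$. As $N$ is compact and normal in $G'$, the space $X/N$ is a proper $L$-space and $\pi_N$ is $G'$-equivariant (with $G'$ acting through $L$). Applying Palais' slice theorem to the Lie group $L$ at the point $\bar x=\pi_N(x)$ yields an $L_{\bar x}$-slice $\bar S\ni\bar x$ inside $\pi_N(O')$, with $L$-equivariant slicing map $\bar f\colon L(\bar S)\to L/L_{\bar x}$. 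Setting $K:=\pi^{-1}(L_{\bar x})$, where $\pi\colon G'\to L$ is the projection, and $S:=\pi_N^{-1}(\bar S)$, I would pull $\bar f$ back to a $G'$-equivariant map $f\colon G'(S)\to G'/K\cong L/L_{\bar x}$ with $S=f^{-1}(eK)$, exhibiting $S$ as a $K$-slice for the $G'$-action with $x\in S\subset O'\subset O$. Here $K$ is compact (an extension of the compact $L_{\bar x}$ by the compact $N$) and contains $G_x$; moreover $G'/K\cong L/L_{\bar x}$ is a manifold, and since $G'$ is open in $G$ the orbit space $G/K$ is locally homeomorphic to $G'/K$, hence also a manifold, so $K$ is large.

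The main obstacle is the final upgrade from a $K$-slice for $G'$ to one for all of $G$. The slicing map extends to a $G$-equivariant $\tilde f\colon G(S)\to G/K$ precisely when the translates $gG'(S)$ with $g\notin G'$ miss $G'(S)$; a short computation (if $gg_1s_1=g_2s_2$ with $g_i\in G'$, $s_i\in S$, then $g_2^{-1}gg_1\in\langle\overline S,\overline S\rangle$) shows $\{g\mid gG'(S)\cap G'(S)\neq\emptyset\}\subset G'\langle \overline S,\overline S\rangle G'$, so it suffices to arrange $\langle\overline S,\overline S\rangle\subset G'$ in the notation $\langle\,\cdot\,,\,\cdot\,\rangle$ of the introduction. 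This I would secure by shrinking $S$ (that is, shrinking $\bar S$ to a smaller $L_{\bar x}$-invariant slice): by properness the sets $\overline{\langle\overline S,\overline S\rangle}$ are compact for small $S$ and decrease to $G_x$, so, $G'$ being open and containing $G_x$, they eventually lie in $G'$. With the translates thus separated, $\xi\colon G\times_K S\to G(S)$ is a $G$-homeomorphism and $\tilde f$ is well defined, so $S$ is the desired large $K$-slice through $x$ inside $O$. I expect verifying the compatibility of these successive shrinkings (thinness of the tube, the inclusion $S\subset O$, and the $N$-invariance needed to form $\pi_N^{-1}$) to be the only genuinely delicate bookkeeping.
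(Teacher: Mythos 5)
The paper itself contains no proof of Theorem~\ref{T:appr}: it is imported from \cite{ant:large}, so there is no internal argument to compare against. Judged on its own terms, your proof follows the classical reduction scheme that the paper explicitly traces back to Abels \cite{ab:78} (and to \cite{ant:94}, \cite{ant:05} in special cases): compactness of $G_x$, passage via van Dantzig to an open almost connected subgroup $G'\supset G_x$, Yamabe approximation by a small compact normal subgroup $N$ of $G'$ with $L=G'/N$ Lie, Palais' slice theorem \cite[Proposition~2.3.1]{pal:61} applied in the proper $L$-space $X/N$, pull-back of the slice along $\pi_N$, and extension of the tube from $G'$ to $G$ via disjointness of the translates $gG'(S)$. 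This is a genuinely self-contained route, whereas the cited source (as its title indicates) obtains approximate slices from equivariant extension (\,$G$-ANE\,) properties of the coset spaces $G/K$. The feature you correctly secure at no extra cost --- that $K=\pi^{-1}(L_{\bar x})$ is \emph{large}, since $G'/K\cong L/L_{\bar x}$ is a manifold and $G/K$ is locally homeomorphic to $G'/K$ because $G'$ is open --- is precisely the property the present paper says distinguishes \cite{ant:large} from the earlier versions, so your construction does deliver the full statement, including $G_x\subset K$.

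Two points in your final shrinking argument need repair, though neither is fatal. First, your slices are $N$-saturated ($S=\pi_N^{-1}(\bar S)$), so they shrink to the compact orbit $N(x)$, not to the point $x$; accordingly the transporter sets decrease to $\langle N(x),N(x)\rangle=NG_xN$, not to $G_x$ as you wrote. Since $N\subset G'$ and $G_x\subset G'$, one still has $NG_xN\subset G'$, so the desired conclusion $\langle S,S\rangle\subset G'$ for small $S$ survives. Second, because $X$ need not be locally compact, you cannot extract convergent (sub)nets of points $s_n\in S_n$ to identify the limit of the transporters; the correct argument separates, in $G$, the compact set $\overline{\langle U,U\rangle}\setminus G'$ (compact since $G'$ is open and $U$ is a small neighborhood) from the relation $\langle\,\cdot\,,\cdot\,\rangle$ at $N(x)$: for each $c$ in that compact set $cN(x)\cap N(x)=\emptyset$, so by continuity of the action and compactness one finds a single open $W\supset N(x)$ with $cW\cap W=\emptyset$ for all such $c$, and then any $N$-saturated slice $S\subset W$ works. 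Note also that your disjointness computation only ever uses points of $S$ itself, so the closures $\overline S$ in $\langle\overline S,\overline S\rangle$ should simply be dropped ($S$ is closed in the tube $G'(S)$ but its closure in $X$ is uncontrolled). With these adjustments the proof is complete.
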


 Recall that by a large subgroup here we mean a compact  subgroup $H\subset G$ such that the quotient space  $G/H$ is  a   manifold.

Thus,  every proper $G$-space is covered by invariant open subsets each of which is a twisted product.

\smallskip

 A version of  this theorem, without requiring  $K$ to be a large subgroup  was obtained  earlier in    \cite{ab:78} (see also \cite{ant:94} for the case of compact non-Lie group actions, and \cite{ant:05} for the case of  almost connected groups).   We emphasize  that  namely the property  \lq\lq $K$ is a large subgroup\rq\rq \ is responsible for the applications of Theorem~ \ref{T:appr} in this  paper. We refer the reader to \cite{ant:large} for  discussion of further properties of large subgroups.

\medskip

On any group $G$ one  can define two natural  (but equivalent) actions of $G$ given  by the formulas:
$$g\cdot x=gx,  \quad \quad\text{and}\quad g*x=xg^{-1},$$
respectively, where in the right parts the group operations are used  with $g, x\in G$.

  Throughout we shall use  the second action only.

\

  By $U(G)$ we shall denote the Banach space of all right uniformly continuous bounded functions $f:G\to \Bbb R$ endowed with the supremum norm. Recall that $f$ is called right  uniformly continuous, if for every $\varepsilon >0$ there exists a neighborhood $O$ of the unity in $G$ such that $|f(y)-f(x)|<\varepsilon$ whenever $yx^{-1}\in O$.

  We shall consider the induced action of $G$ on $U(G)$, i.e.,
  $$(gf)(x)=f(xg), \quad\text{for all}\quad  g, x\in G.$$
  It is easy to check that this action is continuous, linear  and isometric (see e.g., \cite[Proposition~7]{ant:87}).

\begin{proposition}\label{P1}  Let $G$ be a topological  group. Then for  every  $f\in U(G)$,  the map   $f_*:G\to U(G)$ defined by $ f_*(x)(g)=f(xg^{-1}),\  x, g\in G $, is a right uniformly continuous $G$-map.
\end{proposition}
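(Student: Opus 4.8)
My plan is to unbundle the statement into three assertions and dispatch them in order: that $f_*$ genuinely takes values in $U(G)$, that it is $G$-equivariant for the two actions in play, and that it is right uniformly continuous as a map from $G$ into the Banach space $U(G)$. I expect the first two to be essentially formal, and the third to carry the real content.

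For the values, fix $x\in G$. Then $|f_*(x)(g)|=|f(xg^{-1})|\le\|f\|$ for every $g$, so $f_*(x)$ is bounded with $\|f_*(x)\|\le\|f\|$; and its uniform continuity is inherited from that of $f$, the dependence on $g$ factoring through the single map $g\mapsto xg^{-1}$ for the fixed $x$. Thus $f_*(x)\in U(G)$ and $f_*\colon G\to U(G)$ is well defined. Equivariance is then a direct verification. The action on the source is $g*x=xg^{-1}$ and the action on $U(G)$ is $(g\varphi)(h)=\varphi(hg)$, so evaluating at an arbitrary $h\in G$,
\[
 f_*(g*x)(h)=f\big(xg^{-1}h^{-1}\big)=f\big(x(hg)^{-1}\big)=f_*(x)(hg)=\big(g\,f_*(x)\big)(h),
\]
whence $f_*(g*x)=g\,f_*(x)$ and $f_*$ is a $G$-map.

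The heart of the argument is the right uniform continuity of $f_*$, and the decisive point is a cancellation that is uniform in $g$: for all $x,y,g\in G$,
\[
 (xg^{-1})(yg^{-1})^{-1}=x g^{-1} g y^{-1}=xy^{-1},
\]
with the right-hand side \emph{free of} $g$. Hence, given $\varepsilon>0$, I would choose a neighborhood $O$ of the unity realizing the right uniform continuity of $f$. For every $g$ the points $xg^{-1}$ and $yg^{-1}$ then satisfy $(xg^{-1})(yg^{-1})^{-1}=xy^{-1}\in O$ as soon as $xy^{-1}\in O$, so $|f(xg^{-1})-f(yg^{-1})|<\varepsilon$ uniformly in $g$. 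Taking the supremum over $g$ yields
\[
 \|f_*(x)-f_*(y)\|=\sup_{g\in G}\big|f(xg^{-1})-f(yg^{-1})\big|\le\varepsilon\qquad\text{whenever } xy^{-1}\in O,
\]
which is precisely the right uniform continuity of $f_*$.

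The only place where anything could go wrong is this last step: one must guarantee that a \emph{single} neighborhood $O$ controls the increment simultaneously for all $g$, and this is exactly what the $g$-independence of $(xg^{-1})(yg^{-1})^{-1}=xy^{-1}$ secures. Everything else—boundedness, membership in $U(G)$, and equivariance—is formal, so the whole proposition turns on recognizing and exploiting this cancellation.
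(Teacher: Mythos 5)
Your equivariance computation and your proof that $f_*$ itself is right uniformly continuous are both correct, and the cancellation $(xg^{-1})(yg^{-1})^{-1}=xy^{-1}$, uniform in $g$, is indeed the whole substance of what the paper dismisses as ``a simple verification.'' So two of your three assertions are fine.

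The genuine gap is in the part you declare formal: that $f_*(x)$ lands in $U(G)$ at all. You claim its uniform continuity is ``inherited from that of $f$, the dependence on $g$ factoring through the single map $g\mapsto xg^{-1}$,'' but composition with $g\mapsto xg^{-1}$ does \emph{not} preserve right uniform continuity, because inversion interchanges the left and right uniformities of $G$. Concretely, with the paper's definition (control of $|f(y)-f(x)|$ when $yx^{-1}\in O$), the increment $f_*(x)(g)-f_*(x)(h)=f(xg^{-1})-f(xh^{-1})$ is governed by
\[
(xg^{-1})(xh^{-1})^{-1}=x\,(g^{-1}h)\,x^{-1},
\]
i.e.\ by a condition of the form $g^{-1}h\in x^{-1}Ox$, which is a \emph{left}-uniformity requirement on the pair $(g,h)$, whereas membership in $U(G)$ demands control under $gh^{-1}\in O$. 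These two requirements are inequivalent on any group without small invariant neighborhoods of $e$ (non-SIN groups). Indeed, for $x=e$ one has $f_*(e)=\check f$, $\check f(g)=f(g^{-1})$, and inversion carries right uniformly continuous functions onto left uniformly continuous ones; on, say, the $ax+b$ group these classes differ, so there exists $f\in U(G)$ with $f_*(e)\notin U(G)$. Thus this step cannot be repaired by a sharper estimate under the paper's literal conventions; it reflects a left/right convention mismatch in the paper itself (the statement becomes correct if, e.g., one uses the action $(g\varphi)(h)=\varphi(g^{-1}h)$ together with $f_*(x)(g)=f(xg)$, or takes $U(G)$ to consist of functions uniformly continuous with respect to both uniformities). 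Your write-up, by labeling the membership claim ``formal,'' asserts precisely the false step without argument. It is worth noting that what your proof does establish --- equivariance plus right uniform continuity of $f_*$ into the sup-normed space of bounded functions --- is exactly what the paper actually uses later in the proof of Theorem~\ref{T:tub}, so the damage is localized; but as a proof of the proposition as stated, the well-definedness part is a real gap, not a formality.
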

\begin{proof} A simple verification.
\end{proof}

The following lemma  of H.~Abels~\cite[Lemma~1.5]{ab:74} is used in the proof of Theorem~\ref{T:12}:

\begin{lemma}\label{P:abels}  Suppose  $G$ is an almost connected group and $K$  a maximal compact subgroup of $G$. Let $X$ be any $G$-space, $A_1$ and $A_2$ two  $G$-subsets of $X$ such that $X= A_1\cup A_2$ and the intersection $A_1\cap A_2$ is closed in $X$,   and let $f_i:A_i\to G/K$, $i=1, 2$,  be $G$-maps. If  the inverse image $f_2^{-1}(eK)$ is a normal space then there exists a $G$-map $f: X \to G/K$ such that $f|_{A_1}=f_1$.
\end{lemma}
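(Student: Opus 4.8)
The plan is to turn the construction of $f$ into an equivariant extension problem living on a single slice, solve it by an averaging (Tietze--Gleason) argument, and then glue the result with $f_1$ across the closed set $A_1\cap A_2$. The starting observation is that $f_2\colon A_2\to G/K$ is a slicing map, so $S:=f_2^{-1}(eK)$ is a global $K$-slice of $A_2$ and $A_2$ is $G$-homeomorphic to the twisted product $G\times_K S$ (recall $K$ is compact, being a maximal compact subgroup). First I would use the identification recalled in the Preliminaries that restriction to $S$ gives a bijection between $G$-maps $A_2\to G/K$ and $K$-maps $S\to G/K$: a $K$-map $\psi\colon S\to G/K$ corresponds to the $G$-map $[g,s]\mapsto g\,\psi(s)$, which is well defined exactly because $\psi$ is $K$-equivariant. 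Hence producing a $G$-map $F\colon A_2\to G/K$ with $F|_{A_1\cap A_2}=f_1|_{A_1\cap A_2}$ is equivalent to producing a single $K$-map on $S$ that restricts correctly on the closed $K$-invariant set $T:=(A_1\cap A_2)\cap S$. Note that $f_2$ is used here only qualitatively, to exhibit $S$ and the tube structure of $A_2$; its actual values will be discarded, which is consistent with the conclusion asking only for $f|_{A_1}=f_1$.

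Second, I would carry out the extension on the slice. By hypothesis $S=f_2^{-1}(eK)$ is normal, $T$ is a closed $K$-invariant subset of $S$, and $f_1|_T\colon T\to G/K$ is a $K$-map. The point is that the target $G/K$, viewed as a $K$-space, is a $K$-equivariant absolute extensor: since $K$ fixes the base point $eK$, the space $G/K\cong\R^{n}$ carries a $K$-invariant convex structure (via the exponential map of a $G$-invariant metric, identifying $G/K$ $K$-equivariantly with the isotropy representation at $eK$). One therefore extends $f_1|_T$ to a continuous map $S\to G/K$ by Dugundji's theorem, keeping the image inside the convex set, and then averages it over the normalized Haar measure of the compact group $K$. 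The average is $K$-equivariant, is again continuous, and still takes values in $G/K$ by convexity and $K$-invariance, while it is unchanged on $T$ because $f_1|_T$ is already equivariant there. Transporting this $K$-map back through the twisted product yields the desired $G$-map $F\colon A_2\to G/K$ with $F|_{A_1\cap A_2}=f_1|_{A_1\cap A_2}$.

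Finally I would glue: set $f|_{A_1}=f_1$ and $f|_{A_2}=F$. These two definitions agree on $A_1\cap A_2$, so $f\colon X\to G/K$ is a well-defined $G$-map with $f|_{A_1}=f_1$, as required. The main obstacle, and the real heart of the argument, is the \emph{continuity} of this paste: $A_1$ and $A_2$ are only assumed to be $G$-subsets, so the naive pasting lemma does not apply directly. Continuity is automatic at every point of the closed set $C:=A_1\cap A_2$, where both pieces are continuous and coincide, and at every point possessing a neighbourhood contained in a single piece; the delicate points are the frontier points, i.e. those of $A_1\setminus A_2$ lying in $\overline{A_2}$ and, symmetrically, those of $A_2\setminus A_1$ lying in $\overline{A_1}$. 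I expect the crux to be showing that, under the standing hypotheses (in particular the closedness of $C=A_1\cap A_2$ and the closedness of the two pieces that holds in the intended applications), no such frontier point occurs outside $C$, so that $f$ is continuous on $X=A_1\cup A_2$; once this is settled, all the verifications that $f$ is a $G$-map reduce to the routine equivariance of the constructions above.
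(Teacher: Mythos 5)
The paper itself gives no proof of this lemma: it is quoted from H.~Abels \cite[Lemma~1.5]{ab:74} and used as a black box in the proof of Theorem~\ref{T:12}, so the only comparison available is with Abels' argument --- and your strategy (identify $A_2$ with the tube $G\times_K S$ over $S=f_2^{-1}(eK)$, note that $G$-maps $A_2\to G/K$ correspond bijectively to $K$-maps $S\to G/K$, extend $f_1$ from $(A_1\cap A_2)\cap S$ over $S$ by a Tietze--Gleason averaging argument, and glue) is essentially that argument, so at the level of ideas you have reconstructed the intended proof. Two technical points in the extension step need repair, though both are repairable. First, Dugundji's theorem requires a metrizable domain, and $S$ is only assumed normal; since after the linear identification the target is all of $\R^n$, you should instead apply Tietze's theorem coordinatewise and then average over the Haar measure of $K$ exactly as you describe. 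Second, the $K$-equivariant identification of $G/K$ with an orthogonal $K$-representation space ``via the exponential map'' is a Lie-theoretic fact, whereas $G$ here is only an almost connected locally compact group; one must first reduce to the Lie case (choose a compact normal subgroup $N\subset K$ with $G/N$ a Lie group and use $G/K\cong (G/N)\big/(K/N)$), which is how Abels proceeds.

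The genuine gap is the one you flagged and then postponed: continuity of the pasted map. You express the hope that, under the stated hypotheses, no bad frontier points occur; in fact they can, and the lemma as stated --- with $A_1,A_2$ merely $G$-invariant and only $A_1\cap A_2$ closed --- is false, so no argument can close this gap without strengthening the hypotheses. Take $G=\R$, $K=\{e\}$, so $G/K=\R$ with the translation action, and let $X=\R^2$ with $t*(x,y)=(x+t,y)$. Put $A_1=\R\times(-\infty,0)$ and $A_2=\R\times[0,\infty)$: both are invariant, $X=A_1\cup A_2$, and $A_1\cap A_2=\emptyset$ is closed. The maps $f_1(x,y)=x+1/y$ and $f_2(x,y)=x$ are $G$-maps, and $f_2^{-1}(eK)=\{0\}\times[0,\infty)$ is normal. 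Yet every $G$-map $f:X\to\R$ has the form $f(x,y)=x+h(y)$ with $h(y)=f(0,y)$ continuous on all of $\R$, and $f|_{A_1}=f_1$ would force $h(y)=1/y$ for $y<0$, which admits no continuous extension across $y=0$; hence no such $f$ exists. The missing hypothesis is that $A_1$ and $A_2$ be closed in $X$ --- exactly what holds in the paper's application (in the proof of Theorem~\ref{T:12}, both $A_j=\bigcup_{k<j}C_k$ and $C_j$ are closed), and what any correct formulation of the lemma must require. Once closedness is assumed, your gluing step is immediate from the pasting lemma for two closed pieces, and with the two repairs above your proof is complete.
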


In the proof of Theorem~\ref{T:13} the following result due to  K.~Morita~\cite{mor:73}   plays a key role:

\begin{theorem}\label{T:morfil} Let  $X$ be a Tychonoff  space while  $Y$ is a locally compact paracompact space. Then
\begin{equation}\label{morfil}
 {\rm dim}\, X\times Y \le  {\rm dim}\, X + {\rm dim}\, Y,
 \end{equation}
 where  ${\rm dim}$ stands for the covering dimension.
\end{theorem}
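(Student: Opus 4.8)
The plan is to reduce the general inequality to the case of a compact second factor, where the good behaviour of covering dimension under Stone--\v Cech compactification is available, and then to recover the general locally compact paracompact case by a gluing argument adapted to cozero covers. Throughout, ${\rm dim}$ denotes the covering dimension of a Tychonoff space defined via finite functionally open (cozero) covers, and I may assume ${\rm dim}\,X=m<\infty$ and ${\rm dim}\,Y=n<\infty$, since otherwise the right-hand side of \eqref{morfil} is infinite and there is nothing to prove.

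First I would treat the heart of the matter: the case $Y=K$ with $K$ compact. Here the key is the classical identity $\beta(X\times K)=\beta X\times K$, valid precisely because $K$ is compact, together with the fact that the covering dimension of a Tychonoff space coincides with that of its Stone--\v Cech compactification. Consequently ${\rm dim}\,(X\times K)={\rm dim}\,(\beta X\times K)$ and ${\rm dim}\,X={\rm dim}\,\beta X$. Since $\beta X$ and $K$ are \emph{compact} Hausdorff, the classical product theorem for compact spaces yields ${\rm dim}\,(\beta X\times K)\le {\rm dim}\,\beta X+{\rm dim}\,K$, whence ${\rm dim}\,(X\times K)\le {\rm dim}\,X+{\rm dim}\,K\le m+n$. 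This step is exactly what permits $X$ to be merely Tychonoff: although $X\times K$ itself need not be normal, its dimension is measured inside the compact, hence normal, space $\beta X\times K$, where the standard machinery applies.

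Next I would pass from $K$ compact to $Y$ locally compact paracompact. A locally compact paracompact space is the topological sum $Y=\bigsqcup_\alpha Y_\alpha$ of clopen $\sigma$-compact pieces; since the covering dimension of a topological sum is the supremum of the summands' dimensions and $X\times Y=\bigsqcup_\alpha(X\times Y_\alpha)$, it suffices to bound ${\rm dim}\,(X\times Y_\alpha)$ for a single $\sigma$-compact, locally compact $Y_\alpha$. For such a factor I would fix an exhaustion by relatively compact open sets and extract from it a locally finite cozero cover $\{W_j\}$ of $Y_\alpha$ whose members have compact closures $\overline{W_j}$. Pulling back along the projection $X\times Y_\alpha\to Y_\alpha$ produces a locally finite cozero cover $\{X\times W_j\}$ of $X\times Y_\alpha$ in which each closure $X\times\overline{W_j}$ is a product of a Tychonoff space with a compact space, hence of dimension $\le m+n$ by the previous step. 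A locally finite sum theorem for the covering dimension, applied to this cozero cover, then gives ${\rm dim}\,(X\times Y_\alpha)\le m+n$, and taking the supremum over $\alpha$ completes the proof.

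The main obstacle is that, with $X$ only Tychonoff, the product $X\times Y$ need not be normal, so neither the open-cover formulation of dimension nor the usual closed sum theorems are directly applicable. The two devices that circumvent this are exactly the ones above: the reduction of the compact-factor estimate to the compact (and therefore normal) space $\beta X\times K$, and the exclusive use of \emph{cozero} covers by sets with compact closures, together with a locally finite cozero sum theorem in the gluing step, where the paracompactness and local compactness of $Y$ are essential. Verifying that this sum theorem applies in the non-normal setting --- that is, that local finiteness of a cozero cover by sets of bounded dimension forces a bound on the dimension of the whole space --- is the delicate point on which the argument rests.
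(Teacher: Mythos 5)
Your reduction scheme is reasonable, but the crucial first step is false, and it is exactly the heart of the theorem. The identity $\beta(X\times K)=\beta X\times K$ for compact $K$ does \emph{not} hold for a general Tychonoff $X$: by Glicksberg's theorem, $\beta(X\times Y)=\beta X\times\beta Y$ (for infinite factors) holds precisely when $X\times Y$ is pseudocompact, so with $K$ compact your identity would require $X$ itself to be pseudocompact. Concretely, take $X=\N$ and $K=[0,1]$. A continuous function $G$ on $\beta\N\times[0,1]$ determines, by the exponential law for a compact second factor, a sup-norm continuous map $p\mapsto G(p,\cdot)$ from $\beta\N$ into $C([0,1])$; its image is norm-compact, hence equicontinuous by Arzel\`a--Ascoli. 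Consequently $f(n,t)=\sin(nt)$, which is bounded and continuous on $\N\times[0,1]$, admits no continuous extension to $\beta\N\times[0,1]$: the family $\{\sin(nt)\}$ is not equicontinuous. So $X\times K$ is not $C^*$-embedded in $\beta X\times K$, and $\beta(X\times K)\ne\beta X\times K$. What is true is $\dim(X\times K)=\dim\beta(X\times K)$, but since $\beta(X\times K)$ is not $\beta X\times K$ you cannot pass to $\dim(\beta X\times K)$; nor does density of $X\times K$ in $\beta X\times K$ help, because covering dimension is not monotone on dense subspaces without $C^*$- (or $z$-) embedding. The compact-factor inequality $\dim(X\times K)\le\dim X+\dim K$ is true, but proving it is where Morita's actual work lies (finite cozero covers and rectangular refinements; compare Pasynkov's theorem on rectangular products), not a Stone--\v{C}ech argument on the product.

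For the comparison you were asked about: the paper gives no proof of this statement at all --- it quotes it as Morita's theorem \cite{mor:73}, recording only Kat\v{e}tov's cozero-cover definition of $\dim$ for Tychonoff spaces --- so your attempt must stand on its own, and it does not. I will add that your second half is essentially sound and would be the routine part of such a proof: the decomposition of a locally compact paracompact $Y$ into clopen $\sigma$-compact summands, the pullback of a locally finite cozero cover whose members have compact closures, monotonicity of $\dim$ on functionally open subspaces, and a locally finite cozero sum theorem are all available in the non-normal setting from Nagami \cite{nag:80} (Theorems~1.1 and 2.5 there, which this paper itself uses elsewhere). Your closing worry that the sum theorem is the delicate point is therefore misplaced; the genuine gap is the unproved (and, as stated, falsely justified) compact case.
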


One should  mention that the covering dimension  of an arbitrary  Tychonoff space is obtained by a slight modification of  the usual definition of the dimension of a normal space;  namely, one should just  replace  \lq\lq open covering\rq\rq \ by   \lq\lq fuctionally open covering\rq\rq. \
It coincides with the usual definition of covering dimension for normal spaces. This modification is due to Kat\v etov~\cite{kat:50} (see  also  Smirnov~\cite{smir:56}).

\medskip

 An invariant subset $A$ of a  $G$-space $X$ is called  {\it $G$-functionally open} if there exists a  continuous invariant  function   $f:X\to [0, 1]$ such that $A=f^{-1}\big((0, 1]\big)$. Putting  here $G=\{e\}$, the trivial group, we obtain the corresponding notion of a  functionally open set. Clearly, \lq\lq $G$-functionally open\rq\rq \  implies \lq\lq functionally open\rq\rq.

 Recall that in the literature instead of \lq\lq functionally open\rq\rq \   also the term \lq\lq cozero set\rq\rq  \ is used.

It is clear that every open invariant subset $A$ of a  $G$-space $X$ which is metrizable by a $G$-invariant metric is necessarily $G$-functionally open. In fact, if $\rho$ is such a metric on $X$, then the continuous  function $f:X\to [0, 1]$ defined by
$$f(x)= min\{ 1, \rho(x, X\setminus A)\}, \quad x\in X,$$
is   invariant and $A=f^{-1}\big((0, 1]\big)$.

This observation will be used in the proof of Proposition~\ref{P:refined} below.

\medskip

In the proof of Theorem~\ref{T:13} we shall also use the following simple assertion:

\begin{proposition}\label{P:functionally}  Let $G$ be a topological  group, $X$ a $G$-space and $A$ a $G$-functionally open subset of $X$. Then $A/G$ is a functionally open  subset of $X/G$.
\end{proposition}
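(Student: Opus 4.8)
The plan is to take the invariant function that witnesses $A$ being $G$-functionally open and push it down to the orbit space. Let $p\colon X\to X/G$ denote the orbit projection, and fix a continuous invariant function $f\colon X\to[0,1]$ with $A=f^{-1}\big((0,1]\big)$, which exists by the definition of a $G$-functionally open set.

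First I would observe that, since $f$ is $G$-invariant, it is constant on each orbit and hence factors through $p$: there is a unique function $\overline{f}\colon X/G\to[0,1]$ with $\overline{f}\circ p=f$. Continuity of $\overline{f}$ is then immediate from the universal property of the quotient topology carried by $X/G$: a map out of $X/G$ is continuous exactly when its composite with the quotient map $p$ is continuous, and here $\overline{f}\circ p=f$ is continuous by hypothesis.

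It remains to identify the subset $A/G=p(A)$ of $X/G$ with $\overline{f}^{-1}\big((0,1]\big)$. This is where the $G$-invariance of $A$ enters: an orbit meets $A$ if and only if it is contained in $A$, so $p(A)=\{G(x)\mid x\in A\}$. Combined with the equivalences $x\in A\iff f(x)>0\iff \overline{f}\big(p(x)\big)>0$, this yields $A/G=\overline{f}^{-1}\big((0,1]\big)$, exhibiting $A/G$ as a functionally open subset of $X/G$ via the continuous function $\overline{f}$. There is essentially no serious obstacle in this argument; the only point demanding care is the bookkeeping identification of $A/G$ with $p(A)$ and the verification that it coincides with the preimage $\overline{f}^{-1}\big((0,1]\big)$, both of which rest on the invariance of $A$ together with invariance of $f$.
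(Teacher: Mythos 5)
Your proof is correct and is exactly the ``simple verification'' the paper leaves to the reader: factor the invariant witness $f$ through the orbit map $p$ using the quotient topology, and check that the induced map $\overline{f}$ satisfies $\overline{f}^{-1}\big((0,1]\big)=p(A)=A/G$. Nothing is missing, and no different route is taken.
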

\begin{proof} A simple verification.
\end{proof}

Finally we recall that by a $\sigma$-discrete cover of a space $X$ we mean a cover which  is the countable union $\bigcup_{n=1}^\infty \mathcal U_n$,  where every family $\mathcal U_n$ is discrete, i.e., for each  point $x\in X$ there is a neighborhood $V$ of $x$ such that $V\cap A\ne \emptyset$ for at most one element $A$ of $\mathcal U_n$.

\smallskip

\section{Proofs}

Recall  that a cover $\omega$ of a $G$-space is called invariant if each member $U\in \omega$ is an invariant set, i.e.,  $G(U)=U$.

A   cover $\{U_i \mid i\in \mathcal I\}$ of a $G$-space $X$ is called  $G$-functionally open  whenever each  $U_i$ is a $G$-functionally open  subset of $X$. Putting here $G=\{e\}$, the trivial group, we obtain the corresponding notion of a functionally open cover.

\begin{proposition}\label{P:refined}  Let  $G$ be a locally compact group,   $X$ a proper $G$-space and $\{U_s\}_{s\in S}$ an invariant  open cover of  $X$. Suppose that  there exists  a $G$-map $f: X\to Z$ to a  (not necessarily proper) $G$-space $Z$  which is  metrizable  by a $G$-invariant metric and has  an  invariant  open cover $\{O_t\}_{t\in T}$ such that the cover $\{f^{-1}(O_t)\}_{t\in T}$ refines $\{U_s\}_{s\in S}$.  Then $\{U_s\}_{s\in S}$ admits  a $G$-functionally open refinement $\{V_r\}_{r\in R}$ which is both locally finite and $\sigma$-discrete.
\end{proposition}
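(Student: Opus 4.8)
The plan is to reduce everything to the (pseudo)metrizable orbit space $Z/G$, apply the Stone-type $\sigma$-discrete refinement theorem there, and then pull the resulting refinement back to $X$ through $f$. The whole argument hinges on the observation recorded above, that an invariant open subset of a space metrizable by a $G$-invariant metric is automatically $G$-functionally open; this is what will upgrade \lq\lq open\rq\rq\ to \lq\lq $G$-functionally open\rq\rq\ at the very end. Notice that properness of $X$ plays no role in this particular proof; only the invariant metrizability of $Z$ is used.

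First I would pass to the orbit space. Let $\pi\colon Z\to Z/G$ be the orbit map, which is continuous, surjective and open. Since each $O_t$ is invariant and open, $\pi(O_t)$ is open in $Z/G$ and $\pi^{-1}(\pi(O_t))=O_t$; hence $\{\pi(O_t)\}_{t\in T}$ is an open cover of $Z/G$. By the discussion in Section~2 the induced pseudometric $\widetilde{\rho}$ is compatible with the quotient topology of $Z/G$, so $Z/G$ is pseudometrizable.

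Next I would apply Stone's theorem. Its standard proof uses only the triangle inequality and open balls, hence remains valid for a pseudometric; consequently the open cover $\{\pi(O_t)\}_{t\in T}$ of $Z/G$ admits an open refinement $\{M_r\}_{r\in R}$ that is simultaneously $\sigma$-discrete and locally finite (see \cite{eng:77}). If one prefers to avoid pseudometrics, one may instead collapse $Z/G$ by identifying points at $\widetilde{\rho}$-distance $0$: the resulting quotient map is continuous and open, every $\widetilde{\rho}$-open set is saturated for this identification, so one applies the usual metric version of Stone's theorem on the genuine metric space thus obtained and pulls $\{M_r\}$ back. Now set $W_r:=\pi^{-1}(M_r)$. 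Each $W_r$ is an invariant open subset of $Z$, the family $\{W_r\}_{r\in R}$ covers $Z$, and since $M_r\subset\pi(O_{t(r)})$ for some $t(r)$ one gets $W_r\subset O_{t(r)}$; thus $\{W_r\}$ refines $\{O_t\}$. Because $\pi$ is continuous, both discreteness and local finiteness are preserved under $\pi^{-1}$ (a neighborhood in $Z/G$ meeting at most one, respectively finitely many, members pulls back to such a neighborhood in $Z$), so $\{W_r\}$ is again $\sigma$-discrete and locally finite in $Z$.

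Finally I would pull back through $f$, putting $V_r:=f^{-1}(W_r)$. Each $V_r$ is invariant and open; the family $\{V_r\}$ covers $X$ and refines $\{f^{-1}(O_t)\}_{t\in T}$, which in turn refines $\{U_s\}_{s\in S}$, so $\{V_r\}$ refines $\{U_s\}$. As $f$ is continuous, $\{V_r\}$ inherits $\sigma$-discreteness and local finiteness from $\{W_r\}$ by the same preimage argument. It remains to verify that each $V_r$ is $G$-functionally open: since $W_r$ is an invariant open subset of the invariantly metrizable space $Z$, there is an invariant continuous map $\phi_r\colon Z\to[0,1]$ with $W_r=\phi_r^{-1}\big((0,1]\big)$; then $\phi_r\circ f\colon X\to[0,1]$ is continuous and invariant (because $f$ is a $G$-map) and $V_r=(\phi_r\circ f)^{-1}\big((0,1]\big)$, as required. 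The only delicate point is the pseudometric subtlety in the middle step, namely checking that the $\sigma$-discrete refinement machinery survives the passage to the possibly non-Hausdorff quotient $Z/G$; everything else is a routine transfer of properties along the continuous maps $\pi$ and $f$.
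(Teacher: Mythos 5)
Your proof is correct and follows essentially the same route as the paper's: pass to the pseudometrizable orbit space $Z/G$, apply Stone's theorem there to get a locally finite $\sigma$-discrete open refinement, pull it back through the orbit map and then through $f$, and upgrade to $G$-functional openness via the invariant-metric observation composed with the $G$-map $f$. Your explicit handling of the pseudometric subtlety (and the remark that properness of $X$ is never used) matches what the paper does implicitly by citing Engelking's Theorem~4.4.1 and Remark~4.4.2.
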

\begin{proof}
Consider the following composition:

$$\begin{array}{cccccccccc}
X &\stackrel{f}{\longrightarrow} &  Z &\stackrel{q}{\longrightarrow} &  Z/G \\

\end{array}
$$
where  $q$ is the $G$-orbit map.

Since $Z$ is metrizable by a $G$-invariant  metric,  the orbit space $Z/G$
is pseudometrizable (see Preliminaries). Hence the open cover
$\{q(O_t) \ | \ t\in T\}$ of $Z/G$ admits a refinement, say $\{W_i\ | \ i\in \mathcal I\}$  which is both locally finite and $\sigma$-discrete (see
\cite[Theorem~4.4.1 and Remark~4.4.2]{eng:77}).

Then, clearly, $\{{q}^{-1}(W_i) \ | \ i\in \mathcal I\}$  \ is an  invariant open refinement of $\{G(O_t) \mid t\in T\}$, and hence,
$\{f^{-1}\big({q}^{-1}(W_i)\big) \ | \ i\in \mathcal I\}$ is an  invariant open refinement of $\{f^{-1}\big(G(O_t)\big) \mid t\in T\}$.
But $f^{-1}\big(G(O_t)\big)=G\big(f^{-1}(O_t)\big)$ since $f$ is a $G$-map. Since  each $f^{-1}(O_t)$ is contained in some $U_s$ and $U_s$ is invariant, we infer that   $G\big(f^{-1}(O_t)\big)$
 is contained in  $U_s$. This yields that the cover $\{f^{-1}\big({q}^{-1}(W_i)\big) \ | \ i\in \mathcal I\}$ is a refinement of $\{U_s \ | \
s\in S\}$.

Further, since $\{W_i\ | \ i\in \mathcal I\}$  is  locally finite and $\sigma$-discrete and $qf$ is continuous we infer that  $\{f^{-1}\big({q}^{-1}(W_i)\big) \ | \ i\in \mathcal I\}$ is also  locally finite and $\sigma$-discrete.

Besides, each $q^{-1}(W_i)$ is $G$-functionally open because it is an invariant open subset of $Z$ which is metrizable by a $G$-invariant metric (see the observation before Proposition~\ref{P:functionally}). Next, since $f$ is a $G$-map, the  inverse image  $f^{-1}\big({q}^{-1}(W_i)\big)$ is also $G$-functionally open.
 Thus,  $\{f^{-1}\big({q}^{-1}(W_i)\big) \ | \ i\in \mathcal I\}$ is the desired refinement of  $\{U_s \ | \
s\in S\}$.

\end{proof}

\bigskip

\noindent
{\it Proof of  Theorem~\ref{T:tub}}.
 By \cite[Theorem~1.1]{ant:10}, $X$ is a proper $G$-space. According to  Theorem~\ref{T:appr},
   there exist a $G$-invariant neighborhood $V$ of the unity in $X$,  a  large subgroup $H\subset G$ and a $G$-equivariant map $\varphi :V\to G/H$ such that $\varphi(e)=eH$.  Due to   regularity of the quotient space $X/G$,  one can choose  a $G$-invariant neighborhood $P$  of the unity such that
$\overline{P}\subset V$.

Choose  a   right  uniformly continuous function  $f:X\to [0, 1]$  such that
 \begin{equation}\label{1}
 f(e)=0 \ \ \  \text{and} \ \ \  f^{-1}\big([0, 1)\big)\subset P.
\end{equation}
Such a function exists because, as is well known,  uniformly continuous bounded functions separate points from  closed sets in any uniform space  (see, e.g.,  \cite[p.\,7, I.13]{isb:64}; for topological groups see  \cite[Theorem~3.3.11]{arhtk}).

Then, by Proposition~\ref{P1}, $f$ induces an  $X$-equivariant map
$f_*:X\to U(X)$ defined by the rule:
$$ f_*(x)(g)=f(xg^{-1}), \ \ \  x, g\in X. $$

Denote by $Z$ the image $f_*(X)$. Clearly, $Z$ is the $X$-orbit of the
point $f_*(e)$ in the $X$-space $U(X)$, and  the metric of $U(X)$ induces an  $X$-invariant
metric on $Z$.

It follows from (\ref{1}) and the $X$-equivariance of $f_*$ that
\begin{equation}\label{2}
f_*^{-1}(\Gamma_{x, Q})\subset x^{-1}P, \ \ \ \text{for every}\ \ \
x\in X,
\end{equation}
where $Q=[0, 1)$ and $\Gamma_{x, Q}=\{\varphi\in Z \mid \varphi(x)\in Q\}$, which is an open subset  of   $Z$.

Besides, since  $f_*(x)\in \Gamma_{x, Q}$  for every $x\in X$, we see that the sets $\Gamma_{x, Q}$, $x\in X$, constitute a cover  of $Z$.

\medskip

In what follows  we restrict ourselves only to  the induced actions of the subgroup $G\subset X$, i.e., we will consider $X$ and $Z$ just as
$G$-spaces. Note that $Z$ may not be  a proper $G$-space and we do not need this fact in the sequel.

\medskip

It follows from (\ref{2}) and from the $G$-equivariance of $f_*$ that
 \begin{equation}\label{3}
 f_*^{-1}\big(G(\Gamma_{x, Q})\big)\subset x^{-1}P, \ \ \  \text{for every} \ \ \   x\in X.
\end{equation}

Thus, the hypotheses of Proposition~\ref{P:refined} are fulfilled for the $G$-map $f_*:X\to Z$ and the $G$-invariant open covers $\{xP \ | \ x\in X\}$ and $\{G(\Gamma_{x, Q}) \ | \ x\in X\}$ of $X$ and $Z$, respectively.
Then by Proposition~\ref{P:refined},  $\{xP\}_{x\in X}$ admits a   $G$-functionally  open refinement $\{W_i\}_{i\in \mathcal I}$ which is both locally finite and $\sigma$-discrete.
Hence, each $W_i$ is contained in some $xP$ which implies that $\overline{W_i}\subset x\overline P\subset xV$.
Now observe that each  set $xV$ is  $G$-tubular; the corresponding slicing $G$-map $\psi:xV\to G/H$ is just the composition
 $$\begin{array}{cccccccccc}
xV &\stackrel{\eta}{\longrightarrow} &  V &\stackrel{\varphi}{\longrightarrow}  & G/H\end{array}
$$
where  $\eta(xv)=v$ for all $v\in V$.
Perhaps it is in order to emphasize here that $V$ and $xV$ are $G$-invariant subsets of $X$ which is equipped with  the action $g*t=tg^{-1}$, where   $g\in G$ and  $t\in X$. Then, clearly $\eta$ is a $G$-map, and hence, the composition $\psi=\varphi \eta$   is also a $G$-map.

Consequently, each   $\overline{W_i}$ being a $G$-invariant subset of  \ $xV$, \ is itself  \ a $G$-tubular set  with the slicing subgroup $H$, as required.

In conclusion, it  remains to observe that  statement  (3) follows immediately from statement (2) (see Section~2, the first paragraph  after Definition~\ref{D:21}).

 \qed

\

\noindent
{\it Proof of Theorem~\ref{T:tubmax}}.
By  Theorem~\ref{T:tub}, $X$ is covered by a locally finite $\sigma$-discrete   family $\{W_i\}_{i\in \mathcal I}$ of $G$-functionally  open  sets $W_i$ such that the closure $\overline{W_i}$ is a  $G$-tubular set associated with a large slicing subgroup $H\subset G$.

    Let  $\psi_i:\overline{W_i}\to G/H$ be the  corresponding slicing map.
 Since, by the maximality of $K$, there exists an evident  $G$-map $\xi:G/H\to G/K$, the composition  $\varphi_i=\xi \psi_i$ \ is a $G$-map  $\varphi_i: \overline{W_i} \to G/K$.
In this case  $\overline{W_i}$ is $G$-homeomorphic to the twisted product $G\times_KS_i$,  where $S_i=\varphi_i^{-1}(eK)$ (see Section~2). Furthermore, by a result of H.~Abels~ \cite[Theorem~2.1]{ab:74},  $\overline{W_i}$ is  homeomorphic (in fact,  $K$-equivariantly homeomorphic) to the cartesian product $G/K\times S_i$, as required.
\qed

\

\noindent
{\it Proof of Theorem~\ref{T:12}}. By Theorem~\ref{T:tub}, cover $X$ by a locally finite collection $\{C_i\}_{i\in I}$ of closed $G$-tubular sets.
  Let  $\psi_i: C_i\to G/H$, $i\in  \mathcal I$, be the corresponding slicing $G$-map, where    $H$  is a   large  subgroup of  $G$.
 Since, by the maximality of $K$, there exists an evident  $G$-map $\xi:G/H\to G/K$, the composition  $\varphi_i=\xi \psi_i$ \ is a $G$-map  $\varphi_i: C_i\to G/K$.

 Well-order the index set $\mathcal I$, and for every $i\in \mathcal I$ put
$$A_i = \bigcup_{j<i}C_j$$
which is closed in $X$ by the local finiteness of the cover $\{C_i\}_{i\in \mathcal I}$.

We aim at constructing inductively a  $G$-map $\phi : X\to G/K$ as follows.

Assume that $i\in \mathcal I$ and the $G$-maps $\phi_j:A_j\to G/K$  are defined  for all $j < i$ in such a way that $\phi_j|_{A_k}=\phi_k$ whenever $k<j$.

Define  $\phi_i:A_i\to G/K$ as follows.
If   $i$ \ is a limit ordinal, then
$$A_i = \bigcup_{j<i}A_j.$$
 Setting  $\phi_i|_{A_j}=\phi_j$ for every $j<i$ we get a well-defined $G$-map  $\phi_i:A_i\to G/K$.   The  continuity of $\phi_i$   follows from the local finiteness of the cover $\{C_i\}_{i\in \mathcal I}$ and from continuity of all  maps   $\phi_j$, $j<i$.

If $i$  is the successor to $j$, then
$$A_i= A_j \cup  C_j.$$
  Observe that each  $C_k$, $k\in \mathcal I$,   is closed in $X$ and hence it is a normal space.   Since  $A_j\cap C_j$ is a closed $G$-invariant subset of $C_j$,  the restriction $\phi_j|_{A_j\cap C_j}$ extends to $C_j$ by Lemma~\ref{P:abels}, and hence, we get a $G$-map   $\phi_i:A_i\to G/K$. This completes the inductive step and the proof. \qed

\bigskip
\noindent
{\it Proof of Corollary~\ref{C:0}}.  Since $G$ is almost connected,  it has a maximal compact subgroup $K$ (see Preliminaries). By   Theorem~\ref{T:12},   $X$ admits a global $K$-slice $S$, and hence, it is $G$-homeomorphic  to the twisted product
 $G\times_KS$ (see Preliminaries). Since the group $K$ is compact, it then follows that the $K$-orbit map
 $$\xi: G\times S\to G\times_KS\cong_GX$$
 is  open and perfect.  This yields immediately that   the restriction  $f= p|_S: S\to X/G$  of the $G$-orbit map $p: X\to X/G$ is an open and perfect surjection. Indeed, it suffices to observe that  for every set $A\subset S$, one has
 $f^{-1}\big(f(A)\big)= KA$, which is open  (respectively, closed  or compact) whenever $A$ is so.
 This completes the proof. \qed

\bigskip

\noindent
{\it Proof of Theorem~\ref{T:13}}. Consider two cases.

\smallskip

\noindent
{\it Case 1.} Assume that $G$ is  connected.  In this case $G$ has a maximal compact subgroup, say, $K$ (see Preliminaries).
Due to  Theorem~\ref{T:tub}, we can cover $X$ by a locally finite collection $\{\Phi_i\}_{i\in \mathcal I}$ of $G$-functionally open tubular sets.
 Let  $\psi_i: \Phi_i\to G/H$ be the corresponding slicing $G$-map, where   the slicing subgroup  $H$  is a (compact)  large  subgroup of  $G$.
 Since, by the maximality of $K$, there exists an evident  $G$-map $\xi:G/H\to G/K$, the composition  $\varphi_i=\xi \psi_i$ \ is a $G$-map  $\varphi_i: \Phi_i \to G/K$.

Since $\{\Phi_i\}_{i\in \mathcal I}$ is a locally finite functionally open cover of $X$,  by virtue of  the locally finite sum theorem of Nagami~\cite[Theorem~2.5]{nag:80}, it suffices to show that ${\rm dim}\, \Phi\le {\rm dim}\, X/G + {\rm dim}\, G$,  for every member $ \Phi$ of the cover $\{\Phi_i\}_{i\in \mathcal I}$.

  Let  $\varphi:\Phi\to G/K$ be the slicing map corresponding to the  tubular set $\Phi\in \{\Phi_i\}_{i\in \mathcal I}$.
In this case  $\Phi$ is $G$-homeomorphic to the twisted product $G\times_KS$,  where $S=\varphi^{-1}(eK)$ (see Preliminaries). Furthermore, by a result of H.~Abels~ \cite[Theorem~2.1]{ab:74},  $\Phi$ is  homeomorphic (in fact,  $K$-equivariantly homeomorphic) to the cartesian product $G/K\times S$.

 Since the group $G$ is locally compact, and hence,  paracompact, we infer that  the quotient space  $G/K$ is also locally compact and paracompact. Then,  according to  Theorem~\ref{T:morfil}, we have:
\begin{equation}\label{eq1}
 {\rm dim}\, \Phi ={\rm dim}\, (G/K\times S)\le  {\rm dim}\, G/K + {\rm dim}\, S.
 \end{equation}

 Since $K$ is compact, according to a result of V.V.~Filippov~\cite{fil:79}, one has the  inequality:
\begin{equation}\label{eq2}
 {\rm dim}\, S\le  {\rm dim}\, S/K + {\rm dim}\, q
 \end{equation}
 where $q: S \to S/K$ is the $K$-orbit projection and ${\rm dim}\, q=\sup\,\{{\rm dim}\, q^{-1}(a) \ | \ a\in S/K\}$.

 Besides,  since $K$ acts freely on $S$, we see that each $K$-orbit $q^{-1}(a)$ is homeomorphic to $K$, and hence,   ${\rm dim}\, q ={\rm dim}\, K.$

Consequently, combining (\ref{eq1}) and (\ref{eq2}), one obtains:
\begin{equation}\label{eq4'}
 {\rm dim}\, \Phi= {\rm dim}\, (G/K\times S)\le  {\rm dim}\, G/K + {\rm dim}\, K +  {\rm dim}\, S/K.
 \end{equation}

Next, since   $\Phi/G\cong (G\times_KS)/G\cong  S/K$ (see Preliminaries) and
$  {\rm dim}\, G / K + {\rm dim}\, K  = {\rm dim}\, G
$
(see Remark~\ref{R}), it then follows from (\ref{eq4'}) that
$$ {\rm dim}\, \Phi\le  {\rm dim}\, \Phi/G + {\rm dim}\, G.$$
Observe that the  quotient $X/G$ is a Tychonoff space. This follows from a  standard fact about proper actions \cite[Proposition~1.2.8]{pal:61} if we remember that $X$ is a proper $G$-space \cite[Theorem~1.1]{ant:10}. It is worth  noting  that a  quotient space $X/M$ is Tychonoff also for any  closed (not necessarily locally compact) subgroup $M\subset X$    (see \cite[Ch.~II, \S~8, (8.14)]{hero:79}).

Further, since  $\Phi/G$ is a functionally open subset of $X/G$ (see Proposition~\ref{P:functionally}), due to monotonicity of the dimension by functionally open  subsets   (see \cite[Theorem~1.1]{nag:80}), one has ${\rm dim}\, \Phi/G  \le {\rm dim}\, X/G$. This,  together with the previous inequality,  yields that
$$ {\rm dim}\, \Phi\le  {\rm dim}\, X/G + {\rm dim}\, G,$$
 as required.

\medskip

\noindent
{\it Case 2}. Let $G$ be  arbitrary locally compact. By case 1,  we have:
\begin{equation}\label{eq6}
 {\rm dim}\, X\le  {\rm dim}\, X/G_0 + {\rm dim}\, G_0,
\end{equation}
where $G_0$ is the unity  component of $G$.

Now, since  ${\rm dim}\, G_0\le {\rm dim}\, G$, it remains to show that
$$
 {\rm dim}\, X/G_0\le  {\rm dim}\, X/G.
$$

Due to  Theorem~\ref{T:tub}, we can cover $X$ by a locally finite collection $\{\Phi_i\}_{i\in \mathcal I}$ of  $G$-functionally open tubular sets.
 Then $\{\Phi_i/G_0\}_{i\in \mathcal I}$ constitutes a functionally open locally finite cover of $X/G_0$ (see Proposition~\ref{P:functionally}).

  Consequently,  by virtue of  the  locally finite sum theorem of Nagami~\cite[Theorem~2.5]{nag:80}, it suffices to show that ${\rm dim}\, \Phi/G_0\le {\rm dim}\, X/G$   for every member $ \Phi/G_0$ of the cover $\{\Phi_i/G_0\}_{i\in \mathcal I}$.

  Let  $\psi:\Phi\to G/H$ be the slicing $G$-map corresponding to the  tubular set $\Phi\in \{\Phi_i\}_{i\in \mathcal I}$,  where  $H$ is a  large subgroup of $G$.
   Then  the quotient $G/H$ is locally connected (in fact, it is a manifold).

    Since the natural map $G/H\to G/G_0H$ is open and the  local connectedness is invariant under  open maps, we infer that $G/G_0H$ is locally connected.  On the other hand, the following natural homeomorphism  holds:
    \begin{equation}\label{isom}
 G/G_0H\cong  \frac{G/G_0}{G_0H/G_0}.
 \end{equation}

Consequently, $G/G_0H$, being the quotient space of the totally disconnected group $G/G_0$, is itself totally disconnected (this follows from  Remark~\ref{R} and   from the fact that in the realm of locally compact spaces total disconnectedness is equivalent to zero-dimensionality~\cite[Theorem~1.4.5]{eng:78}).
 Hence, $G/G_0H$ should be discrete, implying that $G_0H$ is an open subgroup of $G$.

Further, the $H$-slicing $G$-map $\psi:\Phi \to G/H$ induces a $G$-map
$$f: \Phi/G_0\to \frac{G/H}{G_0}=G/G_0H. $$

Since $G/G_0H$ is a discrete group, we infer that  if we put   $S=f^{-1}(eG_0H)$, then each $gS$ is closed and open in $\Phi/G_0$, and  $\Phi/G_0$ is the disjoint union of the  sets  $gS$ one $g$ out of every coset in  $G/G_0H$.  In other words,  $\Phi/G_0$ is just homeomorphic to the product $(G/G_0H)\times S$.

Thus,
$$\Phi/G_0 \cong (G/G_0H)\times S.$$

 Consider  the natural continuous open homomorphism $\pi: G\to G/G_0$ and denote  $L=G_0H/G_0$. Since  $L=\pi(G_0H)=\pi(H)$ while  $G_0H$ is open and $H$ is compact in  $G$, we infer that $L$ is an open compact subgroup of $G/G_0$.

 Next, observe that the composition of  $f$ with  the isomorphism from (\ref{isom}) is just the following   $G$-map:
 $$\Psi: \Phi/G_0\to \frac{G/G_0}{G_0H/G_0}=\frac{G/G_0}{L}.$$
 \smallskip

Since the quotient group $G/G_0$ acts on the spaces  $\Phi/G_0$ and $\frac{G/G_0}{L}$ \ by the induced actions and  since $S=\Psi^{-1}(L)$, we conclude that $S$  is a global  $L$-slice of the $G/G_0$-space $\Phi/G_0$.

This yields (see Preliminaries) that the $G/G_0$-orbit space of $\Phi/G_0$ is just homeomorphic to the $L$-orbit space $S/L$, i.e.,
$$S/L\cong \frac{\Phi/G_0}{G/G_0}.$$
 In its turn, $$\frac{\Phi/G_0}{G/G_0}\cong \Phi/G.$$
So, in sum we get   that $$S/L\cong \Phi/G.$$

Next, since $\Phi/G_0=  (G/G_0H)\times S$ and $G/G_0H$ is discrete, we infer  that
  \begin{equation}\label{last}
\dim \Phi/G_0=\dim S.
  \end{equation}

Further, since $L$ is a compact group, due to the above quoted result of V.V.~Filippov~\cite{fil:79}, we have
   \begin{equation}\label{fil}
\dim S\le \dim S/L+\dim q,
   \end{equation}
 where $q: S \to S/L$ is the $L$-orbit projection and ${\rm dim}\, q=\sup\,\{{\rm dim}\, q^{-1}(a) \ | \ a\in S/L\}$.

 Since $G$ acts freely on $\Phi$ it follows that $G/G_0$ acts freely on $\Phi/G_0$.  But $L$ is a subgroup of $G/G_0$, and hence, its action on $S$ is also free.
This yields  that each $L$-orbit $q^{-1}(a)$ is homeomorphic to $L$, and hence,   ${\rm dim}\, q ={\rm dim}\, L.$

Further, since  $G/G_0$ is totally disconnected, it  is zero-dimensional (see \cite[Theorem~1.4.5]{eng:78}). It then follows from  Remark~\ref{R} that  $\dim\,L =0$.  This, together with (\ref{fil}), implies that
  \begin{equation}\label{final}
\dim\,S \le \dim\,S/L.
   \end{equation}

 Now, taking into account that  $\dim S/L=\dim \Phi/G$,  from (\ref{last}) and (\ref{final}), we get:
 $$\dim\Phi/G_0=\dim S\le \dim S/L= \dim \Phi/G.$$
 But, as we have mentioned above,    $\dim\Phi/G\le  \dim X/G$,  and we finally get the desired inequality $\dim\Phi/G_0\le \dim X/G. $
\qed

\medskip

\subsection*{Acknowledgement}
The author     would like to thank the referee for   useful comments.
This research  was supported in part by grants IN102608 from PAPIIT (UNAM) and  79536 from CONACYT (Mexico).

\medskip

\bibliographystyle{amsplain}

\end{document}